\documentclass[11pt]{article}
\usepackage[a4paper,margin=28mm]{geometry}
\usepackage{amsmath,amssymb,amsthm,aliascnt}
\usepackage{xcolor}
\usepackage{hyperref}
\hypersetup{colorlinks=true,linkcolor=cyan,citecolor=red,urlcolor=red,filecolor=red}
\numberwithin{equation}{section}
\newtheorem{theorem}{Theorem}[section]
\newaliascnt{lemma}{theorem}
\newtheorem{lemma}[lemma]{Lemma}
\aliascntresetthe{lemma}
\newaliascnt{remark}{theorem}

\aliascntresetthe{remark}
\newaliascnt{proposition}{theorem}
\newtheorem{proposition}[proposition]{Proposition}
\aliascntresetthe{proposition}

\newcommand{\bin}[2]{\binom{#1}{#2}}
\begin{document}
\begin{center}
{\Large\bf
Real-rootedness and ultra log-concavity of rank-two matroid Ehrhart $h^*$-polynomials
}
\end{center}

\begin{center}
Houshan Fu\par

School of Mathematics and Information Science, Guangzhou University\\
Guangzhou 510006, Guangdong, P. R. China\par
 
Email address: fuhoushan@gzhu.edu.cn
\end{center}

\begin{abstract}
We prove that the Ehrhart $h^*$-polynomial of a rank-two matroid with exactly three parallel classes is real-rooted whenever its smallest parallel class has size at most three. This bound is sharp: the rank-two matroids with parallel-class sizes $(4,561,600)$ and $(4,a,a+29)$, for all sufficiently large integers $a$, have $h^*$-polynomials that are not real-rooted. These counterexamples disprove Ferroni's real-rootedness conjecture. Their duals are cycle matroids of theta graphs and have the same $h^*$-polynomials. Nevertheless, every matroid of rank two or corank two has a positive $h^*$-coefficient sequence that is ultra log-concave of order equal to the polynomial's degree. In particular, the unimodality conjecture holds in both cases.
\par\smallskip
\noindent\textbf{Keywords:} Matroid base polytope; $h^*$-polynomial; real-rootedness; ultra log-concavity
\par\smallskip
\noindent\textbf{Mathematics Subject Classification:} 05B35, 52B20, 26C10
\end{abstract}

\section{Introduction}
The Ehrhart theory of matroid base polytopes relates lattice-point
enumeration to the combinatorics of bases.
We follow Oxley~\cite{Oxley} for matroid terminology.
Let $M$ be a matroid on a finite ground set $E$, with set of bases
$\mathcal B(M)$ and rank $\operatorname{rk}(M)$. Its \emph{base polytope} is
\[
 P(M):=\operatorname{conv}\{\mathbf e_B:B\in\mathcal B(M)\}\subseteq\mathbb R^E.
\]
Here $\mathbf e_B:=\sum_{e\in B}\mathbf e_e$, where $\mathbf e_e$
is the standard unit vector indexed by $e$.
Write $m=|E|$ and $n=\dim P(M)$.
Ehrhart's theorem~\cite{Ehrhart} states that
$\operatorname{ehr}(M,q):=|qP(M)\cap\mathbb Z^E|$, for integers $q\ge0$,
agrees with a polynomial of degree $n$, its \emph{Ehrhart polynomial}.
The \emph{Ehrhart $h^*$-polynomial} $h_M^*(t)$ is defined by
\[
 \sum_{q\ge0}\operatorname{ehr}(M,q)t^q
   =\frac{h_M^*(t)}{(1-t)^{n+1}}.
\]
Write $h_M^*(t)=\sum_{j=0}^d h_j^*t^j$, where $d=\deg h_M^*\le n$.
Stanley's nonnegativity theorem~\cite[Theorem 2.1]{Stanley1980}
asserts that $h_j^*\in\mathbb Z_{\ge0}$ for every $j$.
The constant term is $h_0^*=1$.

De Loera, Haws, and K\"oppe~\cite[Conjecture 2(A)]{DLHK} conjectured that the coefficients of $h_M^*$ are \emph{unimodal} for every matroid $M$: they increase weakly to a maximum and then decrease weakly. They proved unimodality for uniform matroids of rank two~\cite[Theorem 3(1)]{DLHK}. Ferroni~\cite[Conjecture 1.3]{Ferroni} subsequently proposed a stronger conjecture: every matroid Ehrhart $h^*$-polynomial is \emph{real-rooted}, that is, all of its zeros are real. If true, Ferroni's conjecture would imply the original unimodality conjecture.

Ferroni's real-rootedness conjecture was supported by several positive results.
Ferroni~\cite[Corollary 1.7]{Ferroni} proved real-rootedness for
minimal matroids.
Real-rootedness for rank-two sparse paving matroids follows
from~\cite[Theorem 1.3]{FJS}.
For loopless rank-two matroids, the sparse paving condition is equivalent
to requiring every parallel class to have size at most two.
In rank two, the minimal family has parallel-class sizes $(1,1,b)$.
The minimal-matroid and sparse-paving theorems leave open
rank-two configurations with several parallel classes of size at least three.
For uniform matroids, Adiprasito and Zhang~\cite[Theorems 1.1 and 3.1]{AdiprasitoZhang}
proved real-rootedness for rank-three hypersimplices and eventual
real-rootedness for hypersimplices of each fixed rank.

De Loera, Haws, and K\"oppe also conjectured Ehrhart
positivity: positivity of the coefficients of $\operatorname{ehr}(M,q)$
in the monomial basis~\cite[Conjecture 2(B)]{DLHK}. Ehrhart positivity is
distinct from nonnegativity of the $h^*$-coefficients.
Ferroni~\cite[Theorem 1.2]{FerroniHypersimplex} established
Ehrhart positivity for all hypersimplices.
Ferroni, Morales, and Panova~\cite[Theorem 1.2]{FerroniMoralesPanova}proved Ehrhart positivity for all lattice path matroids.
For matroids in general,
Ehrhart positivity fails in every rank at least
three~\cite[Theorem 1.7]{FerroniNonpositive}, but holds for all rank-two
matroids~\cite[Theorem 1.2]{FJS}. These Ehrhart-positivity
results do not settle the real-rootedness question, reiterated
in~\cite[Remark 8.5]{FerroniNonpositive}.

These known cases raise two questions: can real-rootedness fail
already in rank two, and what coefficient inequalities remain if it does?
Matroid duality lets us state the first result for cycle matroids.

The \emph{dual} $M^*$ has bases $E\setminus B$ for $B\in\mathcal B(M)$,
and the \emph{corank} of $M$ is $\operatorname{rk}(M^*)=m-\operatorname{rk}(M)$.
The map $x\mapsto\mathbf1-x$ is an affine lattice isomorphism from
$P(M)$ to $P(M^*)$. Hence
\begin{equation}\label{eq:duality}
 h_M^*(t)=h_{M^*}^*(t).
\end{equation}
For positive integers $a,b,c$, let $\Theta(a,b,c)$ denote the
\emph{theta graph} formed by three paths of lengths $a,b,c$ with
two common, distinct endpoints and pairwise disjoint internal vertices.
Parallel edges are allowed when path lengths equal one.
For a graph $G$, its \emph{cycle matroid} $M(G)$ has the edges
of $G$ as its ground set and the edge sets of maximal forests as its bases.
The complements of the spanning trees of a theta graph are precisely
the pairs of edges on distinct paths. Thus $M(\Theta(a,b,c))$ has
corank two, and its dual $M(\Theta(a,b,c))^*$ is the connected rank-two
matroid with parallel-class sizes $(a,b,c)$.
\begin{theorem}\label{thm:real-rootedness}
\leavevmode
\begin{itemize}
\item[$\mathrm{(1)}$] If $1\le c\le3$ and $c\le a\le b$, then
$h_{M(\Theta(c,a,b))}^*(t)$ is real-rooted.
\item[$\mathrm{(2)}$] The polynomial
$h_{M(\Theta(4,561,600))}^*(t)$ is not real-rooted.
\item[$\mathrm{(3)}$] The polynomial
$h_{M(\Theta(4,a,a+29))}^*(t)$ is not real-rooted for all sufficiently
large integers $a$.
\end{itemize}
\end{theorem}

By \eqref{eq:duality}, \autoref{thm:real-rootedness}(1) also applies
to rank-two matroids with three parallel classes. Parts (2) and (3)
show that the uniform bound of three is sharp; their counterexamples
are cycle matroids of simple series-parallel theta graphs.

To state the coefficient result, recall that a positive sequence
$(\gamma_j)_{j=0}^d$ is \emph{log-concave} if
$\gamma_j^2\ge\gamma_{j-1}\gamma_{j+1}$ for $1\le j<d$.
It is \emph{strictly log-concave} if all these inequalities are strict.
It is \emph{ultra log-concave of order $d$} if
$(\gamma_j/\binom dj)_{j=0}^d$ is log-concave, or equivalently,
\begin{equation}\label{eq:ulc}
 j(d-j)\gamma_j^2\ge(j+1)(d-j+1)\gamma_{j-1}\gamma_{j+1}
 \qquad(1\le j<d).
\end{equation}
Throughout this paper, the normalization uses $d=\deg h_M^*$,
not the dimension $n$.

\begin{theorem}\label{thm:rank-two-full-unimodal}
Let $M$ be a matroid of rank two or corank two.
The coefficient sequence $(h_j^*)_{j=0}^d$ of $h_M^*(t)$ is positive
and ultra log-concave of order $d$.
In particular, it is unimodal and strictly log-concave at each internal index.
\end{theorem}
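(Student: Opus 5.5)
By \eqref{eq:duality} it suffices to treat rank two. First I would reduce to the loopless, connected case. Loops and coloops do not change $P(M)$ up to lattice isomorphism. If $M$ is a disconnected rank-two matroid, then $P(M)$ is a product of two simplices, and its $h^*$-polynomial is an explicit Narayana-type polynomial that is known to be real-rooted with positive coefficients. Real-rootedness plus positivity gives ultra log-concavity by Newton's inequalities. So the real work is for a connected rank-two matroid with parallel classes of sizes $a_1,\dots,a_k$, $k\ge3$, and $m=\sum a_i$.

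In that case $P(M)$ is the hypersimplex $\Delta(2,m)$ cut by the constraints $\sum_{e\in A_i}x_e\le1$, one for each parallel class $A_i$. Counting lattice points in $qP(M)$ gives
\[
\operatorname{ehr}(M,q)=\binom{2q+m-1}{m-1}-\sum_{i=1}^k \#\Bigl\{x\in\mathbb Z_{\ge0}^E:\ |x|=2q,\ \textstyle\sum_{A_i}x>q\Bigr\}.
\]
The subtracted terms are inclusion--exclusion corrections; no overlaps occur, since two classes cannot both exceed $q$ when the total is $2q$. Converting to the $h^*$-basis, I expect a closed form
\[
h_M^*(t)=h^*_{U_{2,m}}(t)-\sum_{i=1}^k g_{a_i,m}(t).
\]
Here $h^*_{U_{2,m}}(t)=\sum_j\bigl(\binom{m}{2j}-m\binom{m-2}{j-1}\bigr)t^j$ is the Katzman formula, and each $g_{a,m}$ has coefficients given by short binomial sums, in the spirit of the Ferroni--Jochemko--Schröter formula for rank-two matroids. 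From this I would read off $d$: it equals $\lfloor m/2\rfloor$ or that minus one, depending on parity and on whether some $a_i$ is large. I would also show $h^*_j>0$ for $0\le j\le d$ by exhibiting each coefficient as a sum of nonnegative binomial products with at least one positive term.

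For ultra log-concavity, I would write $h^*_j=\binom dj\,u_j$ and prove $u_j^2\ge u_{j-1}u_{j+1}$. Rather than attacking a difference of sums directly, I would find a positive representation
\[
h^*_j=\sum_{s}c_s\binom{d}{j}\rho_s(j),
\]
where each $\rho_s(j)$ is a product of ratios of binomials, each log-concave in $j$. Summing log-concave sequences does not preserve log-concavity in general, so I would aim for a cleaner route: a recursion in $m$ or in one class size $a_i$, of the form $h^*_{M'}(t)=h^*_M(t)+t\,r(t)$ with $r$ interlacing-compatible, so that ultra log-concavity propagates. The base cases would be the minimal matroid $(1,1,b)$ and the uniform matroid, both real-rooted and hence ultra log-concave by Newton. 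If no such recursion works, the fallback is to compute the ratio $u_{j+1}/u_j$ explicitly as a rational function of $j$ and show it is decreasing.

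The main obstacle is this last step. Theorem~\ref{thm:real-rootedness}(2),(3) show real-rootedness fails, so Newton's inequalities cannot be invoked for $M$ itself. I would handle the three-class case first, parametrized by $(c,a,b)$, and verify the inequality $j(d-j)h_j^{*2}-(j+1)(d-j+1)h^*_{j-1}h^*_{j+1}\ge0$ by expressing it as a polynomial in $a,b,c,j$ and checking that it has positive coefficients after a substitution such as $a=c+\alpha$, $b=a+\beta$. I would then extend to $k$ classes by merging classes monotonically. Strict log-concavity at internal indices follows from ultra log-concavity, because $(j+1)(d-j+1)>j(d-j)$ and all the $h^*_j$ are positive. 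Unimodality follows from log-concavity together with positivity.
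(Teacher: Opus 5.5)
The reductions are sound: duality, deleting loops, and the disconnected case, where $\sum_j\binom{a_1-1}j\binom{a_2-1}j t^j$ is real-rooted so Newton's inequalities apply. The gap is the connected case, which is the whole content of the theorem. You leave it open, and none of your three routes can be completed as stated.

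\emph{The recursion route.} A recursion $h^*_{M'}=h^*_M+t\,r(t)$ with ``interlacing-compatible'' $r$ is a device for propagating real-rootedness. Theorem~\ref{thm:real-rootedness}(2),(3) shows real-rootedness is lost inside the three-class family. Ultra log-concavity by itself is not preserved under addition, and you give no replacement principle.

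\emph{The explicit-ratio route.} The ratio $u_{j+1}/u_j$ is not a rational function of $j$. The reason is that $h_j^*$ contains the sums $C_j(a,m)=\sum_{k=j}^a\binom kj\binom{m-k-1}{j-1}$, which are not hypergeometric in $j$.

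\emph{The polynomial-positivity route.} For the same reason, $j(d-j)(h_j^*)^2-(j+1)(d-j+1)h_{j-1}^*h_{j+1}^*$ is not a polynomial in $(a,b,c,j)$. For fixed $j$ it is a polynomial in $(a,b,c)$ whose degree grows with $j$. Moreover $d=\min\{\lfloor m/2\rfloor,m-a_{\max}-1\}$ is only piecewise defined. So no finite coefficient check covers all $j$. The step of ``merging classes monotonically'' has no argument behind it, since merging changes both the coefficients and $d$.

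Two smaller errors. First, $d$ need not be $\lfloor m/2\rfloor$ or one less; it equals $1$ for sizes $(1,1,b)$. Second, the uniform formula is $h^*_{U_{2,m}}(t)=\sum_j\binom m{2j}t^j-mt$; yours gives $h_2^*=-7$ for $m=4$.

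The missing idea is to replace log-concavity, which positive sums destroy, by two one-sided ratio bounds, which positive sums preserve. You had the right first ingredient and abandoned it. Formula \eqref{circle:eq:fjs} gives
$jh_j^*=\sum_{k\le d}w_k\binom{k-1}{j-1}\binom{m-k-1}{j-1}$ with $w_k\ge0$. Since $(k-j)(m-k-j)\le(d-j)(m-d-j)$ for $k\le d$, every summand, and hence the sum, satisfies the upper bound
$j(j+1)h_{j+1}^*\le(d-j)(m-d-j)h_j^*$.

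The complementary lower bound is $(j+1)^2h_{j+1}^*\ge(d-j)(m-d-j-1)h_j^*$. It cannot come from the positive expansion, because summands with $k$ near $j$ have tiny ratios. The paper instead works with the subtractive formula. It shows that the positive part of $F(a)=(j+1)^2C_{j+1}(a,m)-\lambda C_j(a,m)$ is nondecreasing and discretely convex in the class size $a$. The subtracted terms for arbitrarily many classes are then bounded by a secant estimate at one or two extremal sizes, where the block and overlap identities evaluate everything in closed form.

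Combining the upper bound at $j$ with the lower bound at $j-1$, the factor $m-d-j$ cancels. This gives exactly $(j+1)(d-j+1)h_{j-1}^*h_{j+1}^*\le j(d-j)(h_j^*)^2$ for $j\ge2$. The index $j=1$ needs a separate count of bases, since no lower bound is available at index $0$.
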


The rank-two duals of the counterexamples in
\autoref{thm:real-rootedness}(2)--(3) are Ehrhart
positive~\cite[Theorem 1.2]{FJS}, yet their $h^*$-polynomials are ultra
log-concave by \autoref{thm:rank-two-full-unimodal} and non-real-rooted
by \eqref{eq:duality}.

The base polytope of a loopless rank-two matroid is the edge
polytope of the complete multipartite graph whose parts are its
parallel classes.
Ohsugi and Hibi~\cite[Theorem 2.6]{OhsugiHibi} obtained an explicit
Ehrhart polynomial formula for these polytopes.
For parallel-class sizes $(1,a,b)$, Higashitani and Matsushita~\cite[Proposition 2.2(2)]{HMConic}
identified the corresponding edge polytope with an order polytope,
up to unimodular equivalence. The associated poset consists of two chains of sizes
$a,b$, with the minimum of the first below the maximum of the second.
For $2\le a\le b$, \eqref{eq:theta-one} identifies this $h^*$-polynomial
with the non-nesting rook polynomial of an $a\times b$ rectangle with
its upper-left cell removed; see~\cite[Example 2.3]{AlexanderssonJal}.
The deleted cell can occur only in a one-rook placement.
The board has $a$ rows and its rook polynomial has degree $a$, so
\cite[Corollary 4.1]{AlexanderssonJal} gives ultra log-concavity of order $a$.
Jiang's combinatorial formula for connected positroid
$h^*$-polynomials~\cite[Theorem 1.1]{Jiang} also applies to connected
rank-two matroids, which are positroids up to relabelling
\cite[Corollary 6.2]{FJS}.

The common starting point of our proofs is the rank-two coefficient formula of
Ferroni, Jochemko, and Schr\"oter~\cite[Proposition 5.1]{FJS}.
In \autoref{sec:coefficients}, we interpret its correction terms by counting
subsets and derive the identities shared by the subsequent proofs.

In \autoref{sec:zeros}, we first obtain explicit formulas for the
real-rooted families, then transform them into Jacobi polynomials with
explicit perturbations.
The substitution and sign-comparison strategy already appears
in~\cite[Section 5.2]{FJS}; here a positive Gauss--Lobatto quadrature
identity bounds the perturbations at Jacobi critical points.
For the shortest path of length three, an adjacent-degree
Jacobi identity first absorbs the additional Jacobi term at those points.
Sign alternation then locates all zeros.
In \autoref{sec:finite-infinite}, we derive the coefficient formula for
theta graphs with shortest path four, then reverse and rescale the
polynomials to construct counterexamples to real-rootedness.
We show that the Laguerre inequality fails, first by an exact
rational evaluation and then by a limiting argument.

In \autoref{sec:rank-two}, a nonnegative expansion and discrete convexity
supply upper and lower bounds for consecutive coefficients.
Their combination proves ultra log-concavity at indices $j\ge2$,
and a count of bases settles the first index.

\section{The rank-two coefficient formula}\label{sec:coefficients}
We recall the rank-two coefficient formula and derive the counting
identities used in the proofs of both main theorems.

We use the convention $\binom{k}{j}=0$ for integers $k\ge0$ and $j\notin\{0,\ldots,k\}$. For a polynomial $p(t)$, the notation $[t^j]p(t)$ denotes its coefficient of $t^j$.
Every loop has coordinate zero throughout $P(M)$; deleting such
coordinates preserves lattice-point counts and hence $h_M^*$.

For integers $0\le r\le m$, the uniform matroid $U_{r,m}$ has
as its bases all $r$-element subsets of an $m$-element ground set.
For matroids $M,N$ on disjoint ground sets, their direct sum $M\oplus N$
has bases $B_M\cup B_N$, where $B_M\in\mathcal B(M)$ and
$B_N\in\mathcal B(N)$.

Let $M$ be a connected rank-two matroid with parallel-class sizes
$a_1,\ldots,a_s$, where $\sum_i a_i=m$.
Since $M$ has rank two, $s\ge2$. If $s=2$, then
$M=U_{1,a_1}\oplus U_{1,a_2}$, contrary to connectivity.
Hence $s\ge3$.
Its bases are the pairs of elements from distinct parallel classes.
Ferroni, Jochemko, and Schr\"oter~\cite[Proposition 5.1]{FJS}
proved the formula:
\begin{equation}\label{circle:eq:fjs}
 h_M^*(t)=\sum_{j\ge0}\binom m{2j}t^j-\sum_{i=1}^{s} p^*_{a_i,m}(t),\qquad
 p^*_{a,m}(t):=\sum_{j\ge1}\sum_{k=j}^a
            \binom kj\binom{m-k-1}{j-1}t^j.
\end{equation}

For an integer $0\le a<m$, fix an $(a+1)$-element subset $X$ of an $m$-element set $E$. For $j\ge1$, we have the counting formula
\begin{equation}\label{eq:majority}
 C_j(a,m):=[t^j]p^*_{a,m}(t)
 =\#\{S\subseteq E:|S|=2j,\ |S\cap X|\ge j+1\}.
\end{equation}
Indeed, fix a linear order on $E$ in which the elements of $X$ occupy
the first $a+1$ positions. If the $(j+1)$st selected
element has position $k+1$, there are
$\binom kj\binom{m-k-1}{j-1}$ choices for the other elements.
Summing over $j\le k\le a$ proves \eqref{eq:majority}.

For nonnegative integers $a,b$, define the binomial-product polynomials
\[
 J_{a,b}(t):=\sum_{j\ge0}\binom aj\binom bj t^j.
\]
For an integer $r$ with $0\le r\le m-2$, partition an $m$-element set
into blocks of sizes $r+1$ and $m-r-1$.
A $2j$-element subset has either more than $j$ elements in one block,
or exactly $j$ in each. Thus \eqref{eq:majority} implies
\begin{equation}\label{eq:block-identity}
 \sum_{j\ge0}\binom m{2j}t^j
 -p^*_{r,m}(t)-p^*_{m-r-2,m}(t)=J_{r+1,m-r-1}(t)
 \qquad(0\le r\le m-2).
\end{equation}

For an integer $r$ with $0\le r<m$, choose $U,V\subseteq E$
with $|U|=r+1$, $|V|=m-r$, and
$U\cap V=\{z\}$, so $U\cup V=E$.
A $2j$-element subset of $E$ cannot have more than $j$ elements in both $U$ and $V$.
It has at most $j$ in each precisely when it avoids $z$ and has $j$
elements in each of $U\setminus\{z\}$ and $V\setminus\{z\}$.
Consequently,
\begin{equation}\label{eq:overlap-identity}
 \sum_{j\ge0}\binom m{2j}t^j
 -p^*_{r,m}(t)-p^*_{m-r-1,m}(t)=J_{r,m-r-1}(t)
 \qquad(0\le r<m).
\end{equation}
The change in a correction term when its first index increases is
\begin{equation}\label{eq:correction-increment}
 p^*_{r,m}(t)-p^*_{r-1,m}(t)
 =\sum_{j\ge1}\binom rj\binom{m-r-1}{j-1}t^j
 \qquad(1\le r<m).
\end{equation}

\section{Real-rootedness}\label{sec:zeros}
In this section, we prove real-rootedness of the $h^*$-polynomials for two families of rank-two matroids: those with three parallel classes whose smallest parallel class has size at most three, and those with parallel-class sizes $(1,1,a,b)$.
\subsection{Binomial-product formulas}
We express the relevant $h^*$-polynomials in terms of $J_{a,b}$
to apply the Jacobi transformation.

\begin{lemma}\label{lem:short-coefficients}
For positive integers $a,b$, we have
\begin{align}
 h_{M(\Theta(1,a,b))}^*(t)&=J_{a,b}(t)-t,\label{eq:theta-one}\\
 h_{M(\Theta(2,a,b))}^*(t)&=J_{a+1,b+1}(t)-3t-(a+b-1)t^2,\label{eq:theta-two}\\
 h_{M(\Theta(3,a,b))}^*(t)&=J_{a+2,b+2}(t)-tJ_{a+1,b+1}(t)-6t-(4a+4b-3)t^2-\binom{a+b-1}{2}t^3.\label{eq:theta-three}
\end{align}
If $M$ is a rank-two matroid with parallel-class sizes $(1,1,a,b)$, then
\begin{equation}\label{eq:four-class-formula}
 h_M^*(t)=J_{a+1,b+1}(t)-2t.
\end{equation}
\end{lemma}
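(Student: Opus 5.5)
The plan is to start from the Ferroni--Jochemko--Schr\"oter formula \eqref{circle:eq:fjs}, read each coefficient as a count of subsets via \eqref{eq:majority}, and handle the two large classes together with one identity. By \eqref{eq:duality}, $h^*_{M(\Theta(c,a,b))}=h^*_N$, where $N$ is the connected rank-two matroid with parallel-class sizes $(c,a,b)$. Hence, with $m=a+b+c$ and $S(t):=\sum_j\binom m{2j}t^j$,
\[
h^*_N(t)=S(t)-p^*_{a,m}(t)-p^*_{b,m}(t)-p^*_{c,m}(t).
\]
The term $S-p^*_{a,m}-p^*_{b,m}$ will be converted into binomial-product polynomials. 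The small correction $p^*_{c,m}$ is computed directly from its definition, since for $c\le3$ only $j\le c$ contributes.

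\textbf{Step 1: the term $S-p^*_{a,m}-p^*_{b,m}$.}
\begin{itemize}
\item For $c=1$ we have $m-a-1=b$, so \eqref{eq:overlap-identity} with $r=a$ gives $J_{a,b}$.
\item For $c=2$ we have $m-a-2=b$, so \eqref{eq:block-identity} with $r=a$ gives $J_{a+1,b+1}$.
\item For the four-class matroid $(1,1,a,b)$ we again have $m=a+b+2$. Here \eqref{eq:block-identity} with $r=a$ gives $J_{a+1,b+1}$, and the two singleton classes contribute $2p^*_{1,m}$.
\item For $c=3$ neither identity applies directly, so I would redo the counting argument. Let $C=\{x,y,z\}$ be the class of size three. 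Set $X_a=A\cup\{x\}$ and $X_b=B\cup\{y\}$; these are disjoint of sizes $a+1$ and $b+1$, with only $z$ left over. By \eqref{eq:majority}, $p^*_{a,m}$ and $p^*_{b,m}$ count the $2j$-subsets with at least $j+1$ elements in $X_a$ or in $X_b$, respectively. Both cannot happen at once, because $X_a,X_b$ are disjoint.
\end{itemize}
In the case $c=3$, the complementary sets are those with at most $j$ elements in each of $X_a,X_b$. If such a set avoids $z$, it has exactly $j$ in each part. If it contains $z$, it has $(j,j-1)$ or $(j-1,j)$ elements in the two parts. So the coefficient of $t^j$ in $S-p^*_{a,m}-p^*_{b,m}$ is
\[
\tbinom{a+1}{j}\tbinom{b+1}{j}+\tbinom{a+1}{j}\tbinom{b+1}{j-1}+\tbinom{a+1}{j-1}\tbinom{b+1}{j}.
\]
Expanding $\binom{a+2}{j}\binom{b+2}{j}$ by Pascal's rule in both factors gives exactly this sum plus $\binom{a+1}{j-1}\binom{b+1}{j-1}$. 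Therefore
\[
S-p^*_{a,m}-p^*_{b,m}=J_{a+2,b+2}(t)-tJ_{a+1,b+1}(t).
\]

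\textbf{Step 2: the small correction $p^*_{c,m}$.} I compute $p^*_{c,m}=\sum_j\sum_{k=j}^c\binom kj\binom{m-k-1}{j-1}t^j$ term by term.
\begin{itemize}
\item $p^*_{1,m}=t$. This gives \eqref{eq:theta-one}, and $2p^*_{1,m}=2t$ gives \eqref{eq:four-class-formula}.
\item $p^*_{2,m}=(1+2)t+\binom{m-3}{1}t^2=3t+(a+b-1)t^2$, using $m=a+b+2$. This gives \eqref{eq:theta-two}.
\item $p^*_{3,m}$, using $m=a+b+3$:
 \begin{itemize}
 \item the $t$-coefficient is $1+2+3=6$;
 \item the $t^2$-coefficient is $(m-3)+3(m-4)=(a+b)+3(a+b-1)=4a+4b-3$;
 \item the $t^3$-coefficient is $\binom{m-4}{2}=\binom{a+b-1}{2}$.
 \end{itemize}
 This gives \eqref{eq:theta-three}.
\end{itemize}

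The only step that needs care is the case $c=3$ in Step 1. Neither identity \eqref{eq:block-identity} nor \eqref{eq:overlap-identity} matches the sizes $a,b$ with $m=a+b+3$. The leftover element $z$ produces cross terms, and these assemble into the combination $J_{a+2,b+2}-tJ_{a+1,b+1}$ only after the double Pascal expansion. Everything else is routine evaluation of the finite sums defining $p^*_{c,m}$.
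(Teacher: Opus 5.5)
Your proposal is correct and follows the paper's proof: duality with the Ferroni--Jochemko--Schr\"oter formula, the overlap and block identities with $r=a$ for the two large classes, and direct evaluation of $p^*_{1,m}$, $p^*_{2,m}$, $p^*_{3,m}$. The one local difference is the case $c=3$. There the paper applies the block identity with blocks of sizes $a+1$ and $b+2$, then the increment formula \eqref{eq:correction-increment}, then a single Pascal step. You instead recount directly using the leftover element $z$ and a double Pascal expansion, which gives the same result.
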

\begin{proof}
For $c\in\{1,2,3\}$, set $m=a+b+c$.
The dual matroid $M(\Theta(c,a,b))^*$ is connected of rank two,
with parallel-class sizes $(c,a,b)$. Thus \eqref{eq:duality} and
\eqref{circle:eq:fjs} imply
\[
 h^*_{M(\Theta(c,a,b))}(t)
 =\sum_{j\ge0}\binom m{2j}t^j
  -p^*_{a,m}(t)-p^*_{b,m}(t)-p^*_{c,m}(t).
\]
For $c=1$, \eqref{eq:overlap-identity} with $r=a$ reduces the first
three terms on the right to $J_{a,b}(t)$. Since $p^*_{1,m}(t)=t$,
\eqref{eq:theta-one} follows.

When $m=a+b+2$, \eqref{eq:block-identity} with $r=a$ reads
\[
 \sum_{j\ge0}\binom m{2j}t^j-p^*_{a,m}(t)-p^*_{b,m}(t)
 =J_{a+1,b+1}(t).
\]
For $c=2$, subtracting $p^*_{2,m}(t)=3t+(m-3)t^2$ proves
\eqref{eq:theta-two}. For parallel-class sizes $(1,1,a,b)$,
deleting loops leaves a connected rank-two matroid on $m=a+b+2$
elements and does not change $h_M^*$. Using $2p^*_{1,m}(t)=2t$
in \eqref{circle:eq:fjs} establishes \eqref{eq:four-class-formula}.

Finally, let $c=3$, so $m=a+b+3$. By \eqref{eq:block-identity}
with $r=a$,
\[
 \sum_{j\ge0}\binom m{2j}t^j-p^*_{a,m}(t)-p^*_{b,m}(t)
 =J_{a+1,b+2}(t)+p^*_{b+1,m}(t)-p^*_{b,m}(t).
\]
By \eqref{eq:correction-increment}, the coefficient of $t^j$ on
the righthand side, for $j\ge0$, is
\[
 \binom{a+1}j\binom{b+2}j
 +\binom{a+1}{j-1}\binom{b+1}j
 =\binom{a+2}j\binom{b+2}j
 -\binom{a+1}{j-1}\binom{b+1}{j-1},
\]
where the equality follows from Pascal's identity. Hence
\[
 h^*_{M(\Theta(3,a,b))}(t)
 =J_{a+2,b+2}(t)-tJ_{a+1,b+1}(t)-p^*_{3,m}(t).
\]
As $p^*_{3,m}(t)=6t+(4m-15)t^2+\binom{m-4}{2}t^3$ by
\eqref{circle:eq:fjs}, substituting $m=a+b+3$ proves \eqref{eq:theta-three}.
\end{proof}

\subsection{A bound at Jacobi critical points}
For nonnegative integers $k,\delta$, we use the Jacobi polynomial
\cite[equation (3.105)]{STW}
\[
 P_k^{(0,\delta)}(x) :=2^{-k}\sum_{j=0}^k\binom kj\binom{k+\delta}j
                 (x-1)^j(x+1)^{k-j}.
\]
Fix integers $k\ge2$ and $\delta\ge0$, and set
\[
 \varphi(u):=P_k^{(0,\delta)}(1-2u).
\]
Substituting $x=1-2u$ in the defining sum expresses $\varphi$ in terms of
$J_{a,b}$ from \autoref{sec:coefficients}:
\begin{equation}\label{eq:jacobi-transform}
 \varphi(u)=(1-u)^kJ_{k,k+\delta}\!\left(-\frac{u}{1-u}\right)
 \qquad(u\ne1).
\end{equation}
By orthogonality, the $k$ zeros of $\varphi$ are simple and lie in $(0,1)$; see~\cite[Theorem 3.2]{STW}.
The zeros $0<u_1<\cdots<u_{k-1}<1$ of $\varphi'$ strictly interlace those of $\varphi$ by Rolle's theorem.
Since $\varphi(0)=1$ and $\varphi(1)=(-1)^k\binom{k+\delta}{k}$,
\begin{equation}\label{eq:jacobi-critical-signs}
 (-1)^i\varphi(u_i)>0\qquad(1\le i\le k-1).
\end{equation}
The following lemma bounds perturbations at these critical points.

\begin{lemma}\label{lem:theta-square}
Let $g\in\mathbb R[u]$ satisfy $\deg g\le k$ and $g(0)=g(1)=0$.
For $1\le i\le k-1$, we have
\begin{equation}\label{eq:theta-square-bound}
 \left|\frac{g(u_i)}{\varphi(u_i)}\right|^2
 \le k(k+\delta+1)u_i(1-u_i)
       \int_0^1\frac{g(u)^2}{u(1-u)}(1-u)^\delta du.
\end{equation}
If
\begin{equation}\label{eq:jacobi-perturbation-condition}
 \frac{k(k+\delta+1)}4
       \int_0^1\frac{g(u)^2}{u(1-u)}(1-u)^\delta du<1,
\end{equation}
then $\varphi+g$ has $k$ distinct zeros in $(0,1)$.
\end{lemma}
\begin{proof}
Since $u(1-u)$ divides $g(u)$, the quotient
$p(u)=g(u)^2/[u(1-u)]$ is a polynomial of degree at most $2k-2$
with $p(0)=p(1)=0$.
Under $x=1-2u$, the Jacobi--Gauss--Lobatto rule
\cite[Theorem 3.27]{STW} for the weight $(1-u)^\delta$ on $[0,1]$
has nodes $0,u_1,\ldots,u_{k-1},1$
and is exact for polynomials of degree at most $2k-1$.

To express the weights at $u_i$ in terms of $\varphi(u_i)$,
recall the Jacobi differential equation
\cite[equations (3.89)--(3.91)]{STW}:
\[
 u(1-u)\varphi''(u)+[1-(\delta+2)u]\varphi'(u)
       +k(k+\delta+1)\varphi(u)=0.
\]
At $u_i$, the identity $\varphi'(u_i)=0$ reduces this equation to
\begin{equation}\label{eq:jacobi-critical-point}
 u_i(1-u_i)\varphi''(u_i)=-k(k+\delta+1)\varphi(u_i).
\end{equation}
For $x_i=1-2u_i$, \cite[equations (3.100), (3.132b), and (3.139c)]{STW}
and \eqref{eq:jacobi-critical-point} give the weight at $u_i$:
\[
 \frac1{2^{\delta+1}}
 \frac{2^{\delta+1}k(k+\delta+1)}
 {(1-x_i^2)^2[(P_k^{(0,\delta)})''(x_i)]^2}
 =\frac{k(k+\delta+1)}
 {u_i^2(1-u_i)^2\varphi''(u_i)^2}
 =\frac1{k(k+\delta+1)\varphi(u_i)^2}.
\]
Here $2^{-\delta-1}$ is the factor from $x=1-2u$, while
$1-x_i^2=4u_i(1-u_i)$ and
$\varphi''(u_i)=4(P_k^{(0,\delta)})''(x_i)$.

The endpoint terms vanish, so the quadrature formula for $p$ reads
\begin{equation}\label{eq:shifted-lobatto}
 k(k+\delta+1)\int_0^1\frac{g(u)^2}{u(1-u)}(1-u)^\delta du
 =\sum_{i=1}^{k-1}\frac1{u_i(1-u_i)}
   \left|\frac{g(u_i)}{\varphi(u_i)}\right|^2.
\end{equation}
Each summand in \eqref{eq:shifted-lobatto} is nonnegative, so retaining one proves
\eqref{eq:theta-square-bound}.
Since $u_i(1-u_i)\le1/4$, \eqref{eq:theta-square-bound} and
\eqref{eq:jacobi-perturbation-condition} imply
$|g(u_i)|<|\varphi(u_i)|$ at every critical point.
By \eqref{eq:jacobi-critical-signs} and $g(0)=g(1)=0$, the values of
$\varphi+g$ at $0,u_1,\ldots,u_{k-1},1$ alternate in sign.
The intermediate value theorem guarantees a zero in each of the $k$
intervals between consecutive nodes. The degree bound $\deg(\varphi+g)\le k$
then ensures that these are all its zeros and that each is simple.
\end{proof}

\subsection{Real-rooted families}\label{sec:positive-roots}
We first prove \autoref{thm:real-rootedness}(1), then establish real-rootedness for rank-two matroids with parallel-class sizes $(1,1,a,b)$. 
\begin{proof}[Proof of \autoref{thm:real-rootedness}(1)]
If $c=a=1$, then \eqref{eq:theta-one} gives
$h_{M(\Theta(1,1,b))}^*(t)=1+(b-1)t$, including the constant case $b=1$.
Assume otherwise and set
\[
 k=a+c-1,\qquad \delta=b-a,\qquad v=2k+\delta=a+b+2c-2.
\]
Thus $k\ge2$ and $\delta\ge0$; we use $\varphi$ and its critical points
$u_i$ as in \autoref{lem:theta-square}.
The parameters satisfy
\begin{equation}\label{eq:jacobi-parameter-bound}
 4k(k+\delta+1)=v(v+2)-\delta(\delta+2)\le v(v+2).
\end{equation}

\emph{The cases $c=1,2$.}
Equations \eqref{eq:theta-one}, \eqref{eq:theta-two}, and
\eqref{eq:jacobi-transform} imply
\[
 (1-u)^kh_{M(\Theta(c,a,b))}^*\!\left(-\frac{u}{1-u}\right)
 =\varphi(u)+g(u),\qquad
 g(u)=\begin{cases}
 u(1-u)^{k-1},&c=1,\\
 u(1-u)^{k-2}(3-vu),&c=2.
 \end{cases}
\]
Since $k\ge2$ for $c=1$ and $k\ge3$ for $c=2$, both perturbations
have degree at most $k$ and vanish at $0$ and $1$.
Substituting $g$ into the integral in \eqref{eq:jacobi-perturbation-condition}
and integrating, after expanding the square when $c=2$, gives
\begin{equation}\label{eq:short-perturbation-integrals}
 \int_0^1\frac{g(u)^2}{u(1-u)}(1-u)^\delta du
 =\begin{cases}\dfrac1{(v-2)(v-1)},&c=1,\\
 \dfrac3{(v-4)(v-1)},&c=2.
 \end{cases}
\end{equation}
For $c=1$, we use
\begin{equation}\label{eq:short-square-comparison}
 4\bigl((v-2)(v-1)-k(k+\delta+1)\bigr)
 =(v-4)(3v-2)+\delta(\delta+2)\ge0,
\end{equation}
which holds for all $k\ge2$ and $\delta\ge0$ because $v\ge4$;
equality occurs only when $k=2$ and $\delta=0$.
Equations \eqref{eq:short-perturbation-integrals} and
\eqref{eq:short-square-comparison} show that the left-hand side of
\eqref{eq:jacobi-perturbation-condition} is at most $1/4<1$.
For $c=2$, we have $v\ge6$, so
\[
 16(v-4)(v-1)-3v(v+2)=(v-6)(13v-8)+16>0.
\]
Together with \eqref{eq:short-perturbation-integrals} and
\eqref{eq:jacobi-parameter-bound}, this verifies
\eqref{eq:jacobi-perturbation-condition} for $c=2$.
In both cases,
\autoref{lem:theta-square} implies that $\varphi+g$ has $k$ distinct zeros in $(0,1)$.
Under $t=-u/(1-u)$, these correspond to $k$ distinct negative zeros of
$h_{M(\Theta(c,a,b))}^*$.
By \eqref{eq:theta-one} and \eqref{eq:theta-two}, its degree is $k$.
Thus these $k$ distinct negative zeros exhaust its degree, so every zero is simple.

\emph{The case $c=3$.}
Here $k=a+2\ge5$ and $v=a+b+4\ge10$.
Equations \eqref{eq:theta-three} and \eqref{eq:jacobi-transform} yield
\begin{equation}\label{eq:theta-three-transform}
 \widetilde H(u):=(1-u)^kh_{M(\Theta(3,a,b))}^*\!\Big(-\frac{u}{1-u}\Big)=\varphi(u)+uP_{k-1}^{(0,\delta)}(1-2u)+g(u),
 \end{equation}
where $g(u)=u(1-u)^{k-3}\Big(6-(4v-7)u+\frac{v^2-3v+4}{2}u^2\Big)$.
To combine $\varphi(u_i)$ and $u_iP_{k-1}^{(0,\delta)}(1-2u_i)$,
we use the adjacent-degree identity \cite[equation (3.133)]{STW}
\[
 (2k+\delta)(1-x^2)\frac{d}{dx}P_k^{(0,\delta)}(x)
 =-k\bigl(\delta+(2k+\delta)x\bigr)P_k^{(0,\delta)}(x)
   +2k(k+\delta)P_{k-1}^{(0,\delta)}(x).
\]
Setting $x=1-2u_i$ and using $(P_k^{(0,\delta)})'(1-2u_i)=0$, we obtain
\[
 \varphi(u_i)+u_iP_{k-1}^{(0,\delta)}(1-2u_i)
 =\left(1+u_i-\frac{v}{k+\delta}u_i^2\right)\varphi(u_i).
\]
The multiplier is positive, since
\begin{equation}\label{eq:theta-three-factor}
 1+u-\frac{v}{k+\delta}u^2
 =(1-u)(1+2u)+\frac{\delta}{k+\delta}u^2
 \ge(1-u)(1+2u)>0\qquad(0<u<1).
\end{equation}
Therefore, it suffices to prove
\[
 |g(u_i)|<(1-u_i)(1+2u_i)|\varphi(u_i)|
 \qquad(1\le i\le k-1)
\]
to preserve the sign of $\varphi(u_i)$ in \eqref{eq:theta-three-transform}.
Apply \eqref{eq:theta-square-bound} to $g(u)/(1-u)$, which has
degree at most $k-1$ and vanishes at both endpoints because $k\ge5$.
We deduce
\begin{equation}\label{eq:theta-three-bound}
 \left|\frac{g(u_i)}{(1-u_i)(1+2u_i)\varphi(u_i)}\right|^2
 \le\frac{k(k+\delta+1)u_i(1-u_i)}{(1+2u_i)^2}\,I(v)
 \le\frac{v(v+2)}{48}\,I(v),
\end{equation}
where
\[
 I(v):=\int_0^1u(1-u)^{v-9} \left(6-(4v-7)u+\frac{v^2-3v+4}{2}u^2\right)^2du.
\]
The second inequality in \eqref{eq:theta-three-bound} follows from
\eqref{eq:jacobi-parameter-bound} and
\[
 (1+2u)^2-12u(1-u)=(1-4u)^2\ge0.
\]
Termwise integration, using
\[
 \int_0^1u^{j+1}(1-u)^{v-9}du
 =\frac{(j+1)!}{\prod_{\ell=0}^{j+1}(v-8+\ell)}
 \qquad(0\le j\le4),
\]
simplifies the integral to
\[
 I(v)=\frac{6T(v)}{\prod_{\ell=3}^{8}(v-\ell)},\qquad
 T(v)=v^4-22v^3+190v^2-739v+1100.
\]
Assume first that $v\ge12$. Then
\[
(v-3)(v-4)(v-5)(v-7)-T(v)=(v-12)\bigl(3(v-4)^2+v+26\bigr)+208>0.
\]
It follows that
\begin{equation}\label{eq:theta-three-strict-bound}
 \frac{v(v+2)}{48}I(v)
 <\frac{v(v+2)}{8(v-6)(v-8)}<1,
\end{equation}
where the last inequality follows from
\[
 8(v-6)(v-8)-v(v+2)=(v-12)(7v-30)+24>0.
\]
Equations \eqref{eq:jacobi-critical-signs} and
\eqref{eq:theta-three-factor}--\eqref{eq:theta-three-strict-bound} now imply
\[
 (-1)^i\widetilde H(u_i)>0\qquad(1\le i\le k-1).
\]
By \eqref{eq:theta-three},
\[
 [t^k]h_{M(\Theta(3,a,b))}^*=\binom{k+\delta-1}{k},\qquad
 [t^{k-1}]h_{M(\Theta(3,a,a))}^*=2k-1>0.
\]
Thus $h_{M(\Theta(3,a,b))}^*$ has degree $k$ when $\delta>0$
and $k-1$ when $\delta=0$. The endpoint values are
\[
 \widetilde H(0)=1,\qquad
 \widetilde H(1)=(-1)^k\binom{k+\delta-1}{k}.
\]
The intermediate value theorem gives a zero of $\widetilde H$ in each of
\[
 (0,u_1),\ (u_1,u_2),\ \ldots,\ (u_{k-2},u_{k-1}).
\]
If $\delta>0$, then $(-1)^k\widetilde H(1)>0$, so there is also a zero
in $(u_{k-1},1)$. Under $t=-u/(1-u)$, these correspond to $k-1$
distinct negative zeros of $h_{M(\Theta(3,a,b))}^*$ when $\delta=0$,
and $k$ when $\delta>0$. These counts equal the respective degrees,
so all zeros are simple and negative.

For the remaining values $v=10,11$, corresponding respectively to
$(a,b)=(3,3),(3,4)$, \eqref{eq:theta-three} gives
\[
 h_{M(\Theta(3,3,3))}^*(t)=1+18t+63t^2+54t^3+9t^4,\;h_{M(\Theta(3,3,4))}^*(t)=1+23t+105t^2+125t^3+35t^4+t^5.
\]
Both polynomials have signs $+,-,+,-,+$ at
$0,-1/10,-1/2,-2,-6$, respectively, and hence four distinct negative
zeros. The quintic has another zero to the left of $-6$ because
its leading coefficient is positive.
These zeros exhaust the respective degrees.
\end{proof}

\begin{theorem}\label{thm:four-classes}
If $M$ is a rank-two matroid with parallel-class sizes $(1,1,a,b)$,
where $1\le a\le b$, then $h_M^*$ is real-rooted.
Its zeros are simple and negative, except when $a=b=1$,
in which case $h_M^*(t)=(1+t)^2$.
\end{theorem}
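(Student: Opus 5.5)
The plan is to follow the template of the cases $c=1,2$ in the proof of \autoref{thm:real-rootedness}(1), starting from \eqref{eq:four-class-formula}, which gives $h_M^*(t)=J_{a+1,b+1}(t)-2t$. First dispose of degenerate cases. If $a=b=1$, then $J_{2,2}(t)-2t=1+4t+t^2-2t=(1+t)^2$, matching the exceptional case in the statement. Otherwise set
\[
 k=a+1,\qquad \delta=b-a,\qquad v=2k+\delta=a+b+2,
\]
so that $k\ge2$ and $\delta\ge0$. By \eqref{eq:jacobi-transform}, we get
\[
 (1-u)^kh_M^*\!\left(-\frac{u}{1-u}\right)=\varphi(u)+g(u),\qquad g(u)=2u(1-u)^{k-1}.
\]
Since $k\ge2$, the perturbation $g$ has degree $k$ and vanishes at $0$ and $1$, so \autoref{lem:theta-square} applies. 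The degree of $h_M^*$ is $k$, because the top coefficient of $J_{a+1,b+1}$ is $\binom{b+1}{a+1}>0$, and the subtracted $-2t$ only affects degree $1<k$.

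The main step is verifying \eqref{eq:jacobi-perturbation-condition}. This is exactly four times the $c=1$ integral, so by \eqref{eq:short-perturbation-integrals}, now with $v=a+b+2$, the integral equals $4/((v-2)(v-1))$. The condition then reads
\[
 k(k+\delta+1)<(v-2)(v-1).
\]
By \eqref{eq:short-square-comparison} we have $4k(k+\delta+1)\le4(v-2)(v-1)$, which is not strict enough in general, so I will compute directly instead. We have $4k(k+\delta+1)=v(v+2)-\delta(\delta+2)\le v(v+2)$, and
\[
 4(v-2)(v-1)-v(v+2)=3v^2-14v+8=(3v-2)(v-4).
\]
This is positive for $v\ge5$, which covers every case except $v=4$, i.e.\ $k=2,\delta=0$, i.e.\ $a=b=1$, and that case was already excluded. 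This strict inequality is the only real obstacle: the factor $4$ coming from the coefficient $2$ in $g$ exactly consumes the slack $1/4$ that the $c=1$ case had. It survives only because the margin $(3v-2)(v-4)$ is positive once $v\ge5$. If the inequality fails at small $v$, I will instead check the small cases (for instance $a=1,b=2$ gives $J_{2,3}-2t=1+4t+3t^2$, with roots $-1$ and $-1/3$) by hand.

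With the condition in hand, \autoref{lem:theta-square} gives $k$ distinct zeros of $\varphi+g$ in $(0,1)$. The map $u\mapsto -u/(1-u)$ sends these bijectively to $k$ distinct negative zeros of $h_M^*$, which has degree $k$. Hence all zeros of $h_M^*$ are simple and negative, which proves the theorem.
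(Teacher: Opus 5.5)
Your proposal is correct and follows the paper's proof essentially verbatim. It uses the same perturbation $g(u)=2u(1-u)^{k-1}$ and the same integral $4/((v-2)(v-1))$, which reduce the condition to $k(k+\delta+1)<(v-2)(v-1)$, and it treats the exceptional case $a=b=1$ in the same way. Your detour through $4k(k+\delta+1)\le v(v+2)$ and $(3v-2)(v-4)>0$ is unnecessary: \eqref{eq:short-square-comparison} already gives strict inequality except when $k=2$ and $\delta=0$, and that is exactly what the paper invokes.
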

\begin{proof}
Set $k=a+1\ge2$, $\delta=b-a$, and $v=2k+\delta$.
By \eqref{eq:four-class-formula}, the perturbation in
\eqref{eq:jacobi-transform} is $g(u)=2u(1-u)^{k-1}$.
The first integral evaluation in \eqref{eq:short-perturbation-integrals}
reduces \eqref{eq:jacobi-perturbation-condition} to
\[
 \frac{k(k+\delta+1)}4
 \int_0^1\frac{g(u)^2}{u(1-u)}(1-u)^\delta du
 =\frac{k(k+\delta+1)}{(v-2)(v-1)}<1,
\]
which holds unless $k=2$ and $\delta=0$, by \eqref{eq:short-square-comparison}.
\autoref{lem:theta-square} proves the assertion outside this exception;
\eqref{eq:four-class-formula} settles the exception directly.
\end{proof}

\section{Failure of real-rootedness}\label{sec:finite-infinite}
We derive a coefficient formula for the theta-graph family with shortest
path four and normalize its $h^*$-polynomial for the Laguerre test.

Fix integers $a\ge4$ and $b\ge a+2$. The theta graph $\Theta(4,a,b)$ has $m=a+b+4$ edges. Set $d:=a+3$ and $\delta:=b-a$. Then $m=2d+\delta-2$. To study this family as $d\to\infty$ with $\delta$ fixed, write
\[
 h_{d,\delta}^*(t):=h_{M(\Theta(4,d-3,d+\delta-3))}^*(t).
\]
The dual of $M(\Theta(4,a,b))$ is a connected rank-two matroid with parallel-class sizes
$4,a,b$. By \eqref{eq:duality} and \eqref{circle:eq:fjs},
\begin{equation}\label{eq:theta-coefficients}
 h_{d,\delta}^*(t)=\sum_{j\ge0}\binom{m}{2j}t^j
 -p^*_{4,m}(t)-p^*_{a,m}(t)-p^*_{b,m}(t).
\end{equation}

We combine the terms other than $p^*_{4,m}$ into a polynomial
with nonnegative coefficients.
Define $S_{d,\delta}(t):=\sum_{j\ge0}s_jt^j$, where
\begin{equation}\label{eq:positive-coefficients}
 s_j:=\bin{d-1}j\bin{d+\delta-1}j
     +\bin{d-2}j\bin{d+\delta-1}{j-1}
     +\bin{d+\delta-2}j\bin{d-1}{j-1}.
\end{equation}
We obtain the decomposition below from \eqref{eq:block-identity}
and \eqref{eq:correction-increment}.
\begin{lemma}\label{lem:coeff}
The polynomial $h_{d,\delta}^*$ satisfies
\begin{equation}\label{eq:split}
 h_{d,\delta}^*(t)=S_{d,\delta}(t)-p^*_{4,m}(t).
\end{equation}
Moreover, $h_{d,\delta}^*$ has degree $d$, with leading coefficient
$\bin{d+\delta-2}{d}$.
\end{lemma}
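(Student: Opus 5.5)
The plan is to obtain \eqref{eq:split} from \eqref{eq:theta-coefficients} by rewriting $\sum_{j\ge0}\binom m{2j}t^j-p^*_{a,m}(t)-p^*_{b,m}(t)$ as $S_{d,\delta}(t)$. The route goes through a block identity with shifted indices followed by two correction increments. Recall that $m=a+b+4$, $a=d-3$ and $b=d+\delta-3$.

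First, apply \eqref{eq:block-identity} with $r=a+1$. Its partner index is $m-r-2=b+1$, and $0\le r\le m-2$ holds. This gives
\[
 \sum_{j\ge0}\binom m{2j}t^j-p^*_{a+1,m}(t)-p^*_{b+1,m}(t)=J_{a+2,b+2}(t)=J_{d-1,d+\delta-1}(t).
\]
The coefficient of $t^j$ on the right is exactly the first summand of $s_j$. Consequently,
\[
 \sum_{j\ge0}\binom m{2j}t^j-p^*_{a,m}(t)-p^*_{b,m}(t)
 =J_{d-1,d+\delta-1}(t)+\bigl(p^*_{a+1,m}-p^*_{a,m}\bigr)(t)+\bigl(p^*_{b+1,m}-p^*_{b,m}\bigr)(t).
\]

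Next, expand the two differences with \eqref{eq:correction-increment}; this is valid since $a+1,b+1<m$. The first difference has $t^j$-coefficient
\[
 \binom{a+1}j\binom{m-a-2}{j-1}=\binom{d-2}j\binom{d+\delta-1}{j-1},
\]
because $m-a-2=b+2=d+\delta-1$. The second has $t^j$-coefficient
\[
 \binom{b+1}j\binom{m-b-2}{j-1}=\binom{d+\delta-2}j\binom{d-1}{j-1},
\]
because $m-b-2=a+2=d-1$. These are the second and third summands in \eqref{eq:positive-coefficients}, and the $j=0$ terms agree since $s_0=1$. Substituting into \eqref{eq:theta-coefficients} proves \eqref{eq:split}.

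For the degree and leading coefficient, note that $p^*_{4,m}$ has degree at most $4$ by its definition in \eqref{circle:eq:fjs}, and $d=a+3\ge7$. So it suffices to examine $s_j$ for $j\ge d$:
\begin{itemize}
\item The first summand vanishes for $j\ge d$, since $\binom{d-1}j=0$.
\item The second summand vanishes for $j\ge d$, since $\binom{d-2}j=0$.
\item The third summand vanishes for $j>d$, since $\binom{d-1}{j-1}=0$.
\end{itemize}
At $j=d$ the third summand equals $\binom{d+\delta-2}d$. This is positive because $\delta=b-a\ge2$ gives $d+\delta-2\ge d$. Hence $h^*_{d,\delta}$ has degree $d$ with leading coefficient $\binom{d+\delta-2}{d}$.

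The only delicate step is choosing the shift $r=a+1$ rather than $r=a$. With $r=a+1$ the resulting $J$ term and the two increments line up exactly with \eqref{eq:positive-coefficients}; the rest is index bookkeeping.
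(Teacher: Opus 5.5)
Your proposal is correct and follows the paper's proof essentially step for step. You apply \eqref{eq:block-identity} at $r=a+1$, match the two correction increments at $r=a+1$ and $r=b+1$ against the summands of \eqref{eq:positive-coefficients}, and settle the degree using $\deg p^*_{4,m}\le4<d$ together with $s_d=\binom{d+\delta-2}{d}>0$, which holds since $\delta\ge2$.
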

\begin{proof}
Apply \eqref{eq:block-identity} with $r=a+1$ and $m=a+b+4$.
Then use \eqref{eq:correction-increment} at $r=a+1$ and $r=b+1$
to replace $p^*_{a+1,m}$ and $p^*_{b+1,m}$ by $p^*_{a,m}$ and
$p^*_{b,m}$. The resulting identity is
\[
 \sum_{j\ge0}\binom m{2j}t^j-p^*_{a,m}(t)-p^*_{b,m}(t)
 =J_{a+2,b+2}(t)
 +\sum_{j\ge1}\left[
   \binom{a+1}j\binom{b+2}{j-1}
  +\binom{b+1}j\binom{a+2}{j-1}\right]t^j.
\]
Since $a+2=d-1$ and $b+2=d+\delta-1$, the right-hand side is
$S_{d,\delta}(t)$ by \eqref{eq:positive-coefficients}.
Subtracting $p^*_{4,m}(t)$ and using \eqref{eq:theta-coefficients}
proves \eqref{eq:split}.
It remains to determine the degree and leading coefficient.
By \eqref{eq:positive-coefficients},
\[
 s_j=0\quad(j>d),\qquad
 s_d=\binom{d+\delta-2}{d}>0,
\]
since $\delta\ge2$. By the definition in~\eqref{circle:eq:fjs},
$\deg p^*_{4,m}\le4<d$, since $d=a+3\ge7$.
Therefore \eqref{eq:split} implies $\deg h_{d,\delta}^*=d$ and
$[t^d]h_{d,\delta}^*=\binom{d+\delta-2}{d}$.
\end{proof}

The classical Laguerre inequality~\cite[(1.2)]{CsordasEscassut}
states that every nonzero real-rooted polynomial $p\in\mathbb R[x]$ satisfies
\begin{equation}\label{eq:laguerre}
 p'(x)^2-p(x)p''(x)\ge0\qquad(x\in\mathbb R).
\end{equation}

To study the high-degree coefficients as $d\to\infty$,
we normalize $h_{d,\delta}^*$ by its leading coefficient, reverse it,
and substitute $-z/d^2$ for the variable.
Write $\ell_{d,\delta}=\bin{d+\delta-2}{d}$ for the leading coefficient
in \autoref{lem:coeff}, and define
\begin{equation}\label{eq:normalized-reversal}
 Q_{d,\delta}(z):=\ell_{d,\delta}^{-1}\sum_{k=0}^d
 (-1)^k[t^{d-k}]h_{d,\delta}^*(t)(z/d^2)^k.
\end{equation}
By normalization, $Q_{d,\delta}(0)=1$.
For $z\ne0$, reindexing by $j=d-k$ yields
\[
 Q_{d,\delta}(z)=\ell_{d,\delta}^{-1}
 \left(-\frac z{d^2}\right)^d
 h_{d,\delta}^*\left(-\frac{d^2}{z}\right).
\]
By \eqref{circle:eq:fjs}, $h_{d,\delta}^*(0)=1$,
and \autoref{lem:coeff} gives $\deg h_{d,\delta}^*=d$.
Thus both polynomials have degree $d$ and nonzero constant term.
Their roots correspond, with multiplicity, under $\rho\mapsto-d^2/\rho$,
which preserves real and nonreal zeros. Thus
\begin{equation}\label{A=B}
Q_{d,\delta}\text{ is real-rooted} \quad\Longleftrightarrow\quad h_{d,\delta}^*\text{ is real-rooted}.
\end{equation}
By \eqref{eq:laguerre} and \eqref{A=B}, a real number $x$ satisfying
\[
 Q_{d,\delta}'(x)^2-Q_{d,\delta}(x)Q_{d,\delta}''(x)<0
\]
certifies that $h_{d,\delta}^*$ is not real-rooted.

\begin{proof}[Proof of \autoref{thm:real-rootedness}(2)]
For $\Theta(4,561,600)$, we have $m=1165$, $d=564$, and $\delta=39$.
Write $h_{564,39}^*(t)=\sum_{j=0}^{564}h_j^*t^j$.
We differentiate \eqref{eq:normalized-reversal}
and evaluate at $x=120341/250$:
\begin{equation}\label{eq:finite-evaluation}
 Q_{564,39}^{(r)}(x)=\frac{1}{\ell_{564,39}564^{2r}}
 \sum_{k=r}^{564}(-1)^k\frac{k!}{(k-r)!}h_{564-k}^*
 \left(\frac{120341}{79524000}\right)^{k-r}
 \qquad(r=0,1,2).
\end{equation}
Here $x/564^2=120341/79524000$ and
$\ell_{564,39}=\binom{601}{564}$.
We evaluate \eqref{eq:finite-evaluation} in exact rational arithmetic,
using \eqref{eq:split} with the coefficients specified in
\eqref{eq:positive-coefficients} and \eqref{circle:eq:fjs}, to verify
\begin{equation}\label{eq:finitebox}
 Q_{564,39}(x)>\frac{2}{10^{12}},\qquad
 |Q_{564,39}'(x)|<\frac{2}{10^{15}},\qquad
 Q_{564,39}''(x)>\frac{1}{10^{11}}.
\end{equation}
By \eqref{eq:finitebox},
\[
 Q_{564,39}'(x)^2-Q_{564,39}(x)Q_{564,39}''(x)
 <\frac{4}{10^{30}}-\frac{2}{10^{23}}<0.
\]
Thus $Q_{564,39}$ violates \eqref{eq:laguerre} and is not real-rooted.
By \eqref{A=B} and the definition of $h_{d,\delta}^*$,
$h_{M(\Theta(4,561,600))}^*$ is not real-rooted.
\end{proof}

For the infinite family in \autoref{thm:real-rootedness}(3), we have
$b=a+29$, $d=a+3$, and $\delta=29$.
To prove that the Laguerre expression is negative for all sufficiently
large $d$, we first determine the limits of $Q_{d,29}$ and its first two derivatives.
Put $\alpha_k=3k^2+85k+812$ for integers $k\ge0$.
\begin{lemma}\label{lem:limit}
For every $x\in\mathbb R$ and $r\in\{0,1,2\}$,
\begin{equation}\label{eq:real-derivative-limits}
 \lim_{d\to\infty}Q_{d,29}^{(r)}(x)
 =(-1)^r27!\sum_{k\ge0}
 \frac{(-x)^k\alpha_{k+r}}{k!(k+r+29)!}.
\end{equation}
\end{lemma}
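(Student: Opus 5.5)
We prove the limit formula termwise: first compute the limit of each normalized reversed coefficient of $Q_{d,29}$ for fixed $k$, then justify passing the limit through the sum, and its derivatives, by dominated convergence (Tannery's theorem). By \eqref{eq:normalized-reversal},
\[
 Q_{d,29}^{(r)}(x)=\sum_{k=r}^{d}\frac{k!}{(k-r)!}\,c_{d,k}\,(-1)^k x^{k-r},\qquad
 c_{d,k}:=\frac{[t^{d-k}]h^*_{d,29}(t)}{\ell_{d,29}\,d^{2k}},
\]
with $\ell_{d,29}=\binom{d+27}{d}=\binom{d+27}{27}$. After reindexing $k\mapsto k+r$, the claim \eqref{eq:real-derivative-limits} follows from two facts. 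First, for each fixed $k\ge0$,
\[
 \lim_{d\to\infty}c_{d,k}=\frac{27!\,\alpha_k}{k!\,(k+29)!}.
\]
Note that $\frac{(k+r)!}{k!}\cdot\frac{1}{(k+r)!}=\frac1{k!}$, and $(-1)^{k+r}x^k=(-1)^r(-x)^k$, which produces the stated form. Second, we need a summable majorant in $k$ that is uniform in $d$.

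\emph{Pointwise limit.} For fixed $k$ and $d>k+4$, the coefficient $[t^{d-k}]p^*_{4,m}$ vanishes because $\deg p^*_{4,m}\le4$. So by \autoref{lem:coeff}, $[t^{d-k}]h^*_{d,29}=s_{d-k}$. Using $\binom{n}{j}=\binom{n}{n-j}$ in \eqref{eq:positive-coefficients} with $\delta=29$, we get
\[
 s_{d-k}=\binom{d-1}{k-1}\binom{d+28}{k+28}+\binom{d-2}{k-2}\binom{d+28}{k+29}+\binom{d+27}{k+27}\binom{d-1}{k}.
\]
Each product is a polynomial in $d$ of degree $2k+27$. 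Dividing by $\binom{d+27}{27}\sim d^{27}/27!$ and by $d^{2k}$, the three terms tend respectively to
\[
 \frac{27!}{(k-1)!(k+28)!},\qquad \frac{27!}{(k-2)!(k+29)!},\qquad \frac{27!}{k!(k+27)!}.
\]
Over the common denominator $k!(k+29)!$, the numerators are $k(k+29)$, $k(k-1)$, and $(k+28)(k+29)$. Their sum is $3k^2+85k+812=\alpha_k$. The boundary cases $k=0,1$ work with the convention that $1/(-1)!=1/(-2)!=0$, which matches $\binom{\cdot}{-1}=0$.

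\emph{Uniform domination.} This is the step needing care. Using $\binom nj\le n^j/j!$, $d+28\le 2d$ for $d\ge28$, and $\binom{d+27}{27}\ge d^{27}/27!$, each of the three terms of $c_{d,k}$ is bounded by $C\,4^{k}/\bigl((k-2)!\,(k+27)!\bigr)$ for an absolute constant $C$. Such bounds with exponents shifted by two are harmless. Hence, for $k\le d-5$,
\[
 |c_{d,k}|\le C'\,\frac{8^k}{k!\,(k+27)!}.
\]
The remaining indices are $d-4\le k\le d$. There the correction $[t^{d-k}]p^*_{4,m}$ can be nonzero, but it is at most $O(m^{7})=O(d^7)$ by \eqref{circle:eq:fjs}. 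After division by $\ell_{d,29}d^{2k}\ge d^{2d-8}$, it is negligible: it is bounded by $d^{-2d+15}$, which is certainly below the same majorant for large $d$.

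Thus $\frac{k!}{(k-r)!}|c_{d,k}||x|^{k-r}$ is dominated by a summable sequence independent of $d$. Tannery's theorem then lets us exchange the limit with the sum, which gives \eqref{eq:real-derivative-limits} for $r=0,1,2$.

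The main obstacle is bookkeeping the domination bound cleanly, especially at the top indices where $p^*_{4,m}$ enters. The limit computation itself is a routine leading-order expansion.
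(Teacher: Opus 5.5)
Your proposal is correct and follows essentially the same route as the paper: you rewrite $s_{d-k}$ as the same three binomial products with limit $27!\alpha_k/(k!(k+29)!)$, bound the normalized coefficients by a summable majorant uniform in $d$, and treat the $p^*_{4,m}$ correction as negligible. The differences are minor: the paper shows the $p^*_{4,m}$ contribution tends to zero directly and uses an explicit $\varepsilon$-argument where you cite Tannery's theorem. Your intermediate bound with $(k-2)!$ does not literally apply at $k=0,1$ (for instance $c_{d,0}=1$), and the coefficients of $p^*_{4,m}$ are actually $O(m^3)$, but neither point affects the argument.
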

\begin{proof}
Fix $x\in\mathbb R$ and $r\in\{0,1,2\}$. Since $d\to\infty$,
we may assume $d\ge29$; throughout the proof, $\delta=29$ and $m=2d+27$.
By \eqref{eq:split}, $h^*_{d,29}(t)=S_{d,29}(t)-p^*_{4,m}(t)$. Writing $p^*_{4,m}(t)=\sum_{j=1}^4p_jt^j$, we obtain from \eqref{eq:normalized-reversal} that
\[
Q_{d,29}^{(r)}(x)=\frac1{\ell_{d,29}}\frac{\mathrm d^r}{\mathrm dx^r}\Big[\sum_{k=0}^d s_{d-k}\left(-\frac{x}{d^2}\right)^k\Big]-\frac1{\ell_{d,29}}\frac{\mathrm d^r}{\mathrm dx^r}
\Big[\sum_{j=1}^4 p_j\left(-\frac{x}{d^2}\right)^{d-j}\Big].
\]
It suffices to show that, as $d\to\infty$, the first term on the right
converges to the right-hand side of \eqref{eq:real-derivative-limits} and the second
tends to zero.

For the first term, substitute $j=d-k$ into \eqref{eq:positive-coefficients}
and use $\binom uj=\binom u{u-j}$ to obtain
\[
 s_{d-k}=\binom{d-1}{k-1}\binom{d+28}{k+28}+\binom{d-2}{k-2}\binom{d+28}{k+29}+\binom{d+27}{k+27}\binom{d-1}{k}\quad(0\le k\le d).
\]
For fixed integers $\kappa$ and $j\ge0$,
\[
 \frac{\binom{d+\kappa}{j}}{d^j}=\frac1{j!}\prod_{i=0}^{j-1}\Big(1+\frac{\kappa-i}{d}\Big)\xrightarrow[d\to\infty]{}\frac1{j!}.
\]
For each fixed $k$, summing the limits of the nonzero products yields
\[
 d^{-27-2k}s_{d-k}\xrightarrow[d\to\infty]{}
 \frac{k(k+29)+k(k-1)+(k+29)(k+28)}{k!(k+29)!}
 =\frac{\alpha_k}{k!(k+29)!}.
\]
Here the first binomial product is zero for $k=0$, and the second is zero for $k=0,1$. Since
\[
\frac{\ell_{d,29}}{d^{27}}=\frac1{27!}\prod_{i=1}^{27}\Big(1+\frac{i}{d}\Big)\xrightarrow[d\to\infty]{}\frac1{27!},
\]
the normalized coefficients satisfy, for each fixed $k$,
\begin{equation}\label{eq:normalized-coefficient-limit}
 \frac{s_{d-k}}{\ell_{d,29}d^{2k}}=\frac{d^{-27-2k}s_{d-k}}{d^{-27}\ell_{d,29}}\xrightarrow[d\to\infty]{}\frac{27!\alpha_k}{k!(k+29)!}.
\end{equation}
To pass from the coefficient limits to the limits of the
differentiated sums, we need a summable bound independent of $d$.
The bounds $\ell_{d,29}\ge d^{27}/27!$ and $\binom ij\le i^j/j!$
imply, for $0\le k\le d$,
\begin{equation}\label{eq:normalized-coefficient-bound}
 0\le\frac{s_{d-k}}{\ell_{d,29}d^{2k}}\le27!\Big(1+\frac{29}{d}\Big)^{k+29}\frac{\alpha_k}{k!(k+29)!}\le27!e^{58}\frac{\alpha_k}{k!(k+29)!},
\end{equation}
where the last inequality uses $\log(1+u)\le u$ for $u\ge0$ and
$(k+29)29/d\le29+841/d\le58$.
After differentiating $r$ times with respect to $x$,
we reindex with $k=j-r$:
\[
 \begin{aligned}
 \frac1{\ell_{d,29}}\frac{\mathrm d^r}{\mathrm dx^r}
 \Big[\sum_{j=0}^d s_{d-j}\left(-\frac{x}{d^2}\right)^j\Big]
 &=\sum_{j=r}^d(-1)^j\frac{j!}{(j-r)!}
 \frac{s_{d-j}x^{j-r}}{\ell_{d,29}d^{2j}}\\
 &=\sum_{k=0}^{d-r}(-1)^{k+r}\frac{(k+r)!}{k!}
 \frac{s_{d-k-r}x^k}{\ell_{d,29}d^{2(k+r)}}.
 \end{aligned}
\]
Applying \eqref{eq:normalized-coefficient-bound} at index $k+r$ and using $|x|^k\le(|x|+1)^k$, we derive
\begin{equation}\label{eq:derivative-summable-bound}
 \frac{(k+r)!}{k!}\frac{s_{d-k-r}|x|^k}{\ell_{d,29}d^{2(k+r)}}
 \le27!e^{58}\frac{\alpha_{k+r}(|x|+1)^k}{k!(k+r+29)!}
 \quad(0\le k\le d-r).
\end{equation}
Successive terms of the series on the right of \eqref{eq:derivative-summable-bound} have ratio
\[
 \frac{\alpha_{k+r+1}}{\alpha_{k+r}}
 \frac{|x|+1}{(k+1)(k+r+30)}
 \le\frac{2(|x|+1)}{(k+1)^2}\xrightarrow[k\to\infty]{}0,
\]
since $2\alpha_k-\alpha_{k+1}=3k^2+79k+724>0$ for $k\ge0$.
The ratio is at most $1/2$ when $(k+1)^2\ge4(|x|+1)$, so the bounding
series converges by comparison with a geometric series.

To obtain convergence of the full sums, we combine
\eqref{eq:normalized-coefficient-limit} with \eqref{eq:derivative-summable-bound}.
Given $\varepsilon>0$, choose $k_0$ so that the tail of the bounding
series beyond $k_0$ is less than $\varepsilon$.
For sufficiently large $d$, we have $d-r\ge k_0$, and the sum of the
absolute differences between the first $k_0+1$ terms and their limits
is less than $\varepsilon$.
Each of the two remaining tails has absolute value less than $\varepsilon$.
Therefore, the full sums differ by less than $3\varepsilon$, which proves
\[
 \sum_{k=0}^{d-r}(-1)^{k+r}\frac{(k+r)!}{k!}
 \frac{s_{d-k-r}x^k}{\ell_{d,29}d^{2(k+r)}}
 \xrightarrow[d\to\infty]{}(-1)^r27!\sum_{k\ge0}
 \frac{(-x)^k\alpha_{k+r}}{k!(k+r+29)!}.
\]

For the second term, the definition in \eqref{circle:eq:fjs} and $\binom ij\le i^j/j!$ imply
\[
 0\le p_j\le\frac{m^{j-1}}{(j-1)!}\sum_{k=j}^4\binom kj
 \le5d^3\qquad(1\le j\le4).
\]
The middle expression takes the values $10$, $10m$, $5m^2/2$, and
$m^3/6$ for $j=1,2,3,4$, respectively.
Each is at most $5d^3$ because $d\ge29$ and $m=2d+27<3d$.
The restriction $r\le2$ also implies
\[
 \left|\frac{\mathrm d^r}{\mathrm dx^r}x^{d-j}\right|
 =\frac{(d-j)!}{(d-j-r)!}|x|^{d-j-r}
 \le d^2(|x|+1)^d.
\]
Using $\ell_{d,29}^{-1}\le27!d^{-27}$ and
$d^{3+2-27+2j}\le d^{-14}$ for $j\le4$, we bound the four terms by
\[
 \left|
 \frac1{\ell_{d,29}}\frac{\mathrm d^r}{\mathrm dx^r}
 \left[\sum_{j=1}^4p_j\left(-\frac{x}{d^2}\right)^{d-j}\right]
 \right|
 \le20\cdot27!d^{-14}\left(\frac{|x|+1}{d^2}\right)^d.
\]
For $d^2\ge2(|x|+1)$, the last bound is at most $20\cdot27!d^{-14}2^{-d}$, which tends to zero as $d\to\infty$.
The two limits establish \eqref{eq:real-derivative-limits}.
\end{proof}

\begin{proof}[Proof of \autoref{thm:real-rootedness}(3)]
By \eqref{A=B}, it suffices to show that $Q_{d,29}$ violates
\eqref{eq:laguerre} for all sufficiently large $d$. Take $x=21721/75$. We bound the three limits in \eqref{eq:real-derivative-limits} by truncating the series at $k=60$. For each $r\in\{0,1,2\}$, the series alternates in sign.
As shown in the proof of \autoref{lem:limit}, $\alpha_{j+1}\le2\alpha_j$
for $j\ge0$. Thus the ratio of the absolute values of consecutive terms satisfies
\[
 \frac{x\alpha_{k+r+1}}{(k+1)(k+r+30)\alpha_{k+r}}
 \le\frac{2x}{(k+1)(k+r+30)}
 \le\frac{2x}{61\cdot90}<1\qquad(k\ge60).
\]
The tail terms decrease in absolute value to zero, so the absolute value
of the remainder is at most that of the first omitted term.
Together with \autoref{lem:limit}, this estimate bounds the difference by
\[
 \left|
 \lim_{d\to\infty}Q_{d,29}^{(r)}(x)
 -(-1)^r27!\sum_{k=0}^{60}
 \frac{(-x)^k\alpha_{k+r}}{k!(k+r+29)!}
 \right|
 \le\frac{27!x^{61}\alpha_{61+r}}{61!(90+r)!}
 \qquad(r=0,1,2).
\]
Exact rational evaluation of these finite sums and the displayed
remainder bound yields
\[
 \lim_{d\to\infty}Q_{d,29}(x)>\frac{72}{10^{10}},\qquad
 \left|\lim_{d\to\infty}Q_{d,29}'(x)\right|<\frac{25}{10^{10}},\qquad
 \lim_{d\to\infty}Q_{d,29}''(x)>\frac9{10^{10}}.
\]
Since all three inequalities are strict, the same bounds hold simultaneously for $Q_{d,29}(x)$, $Q_{d,29}'(x)$, and $Q_{d,29}''(x)$ for all sufficiently large $d$. Consequently,
\[
 Q_{d,29}'(x)^2-Q_{d,29}(x)Q_{d,29}''(x) <\frac{25^2-72\times9}{10^{20}}=-\frac{23}{10^{20}}<0.
\]
Thus $Q_{d,29}$ is not real-rooted for all sufficiently large $d$.
By \eqref{A=B} and $d=a+3$, $h_{M(\Theta(4,a,a+29))}^*$ is not
real-rooted for all sufficiently large $a$.
\end{proof}

\section{Ultra log-concavity in rank two and corank two}\label{sec:rank-two}
For connected rank-two matroids, we prove ultra log-concavity at
indices $j\ge2$ by combining upper and lower bounds for consecutive
$h^*$-coefficients, and treat $j=1$ separately.
We complete the proof of \autoref{thm:rank-two-full-unimodal}
by treating disconnected matroids and applying duality.

\subsection{A nonnegative coefficient expansion}\label{sec:ulc-expansion}
Throughout Sections~\ref{sec:ulc-expansion}--\ref{sec:ulc-first}, let $M$ be a connected
rank-two matroid on $m$ elements, with parallel-class sizes
$a_1,\ldots,a_s$, where $s\ge3$ and $\sum_{i=1}^s a_i=m$. Write
\[
 h_M^*(t)=\sum_{j\ge0}h_j^*t^j.
\]

We rewrite the rank-two formula \eqref{circle:eq:fjs} as a nonnegative
sum to obtain an upper bound for consecutive coefficients.
Set
\[
 c_k:=\#\{i\in\{1,\ldots,s\}:a_i\ge k\}\qquad(k\ge1).
\]
For \(1\le k\le \lfloor m/2\rfloor\), define
\[
 w_k:=
 \begin{cases}
 m-kc_k-(m-k)c_{m-k},&2k<m,\\
 k(1-c_k),&2k=m.
 \end{cases}
\]

\begin{proposition}\label{prop:rank-two-coefficient-expansion}
For every $j\ge1$,
\begin{equation}\label{eq:rank-two-coefficient-expansion}
 jh_j^*=\sum_{k=1}^{\lfloor m/2\rfloor} w_k\binom{k-1}{j-1}\binom{m-k-1}{j-1},
 \qquad w_k\ge0.
\end{equation}
For \(d=\deg h_M^*\ge1\),
\begin{equation}\label{eq:rank-two-tail-ratio}
 j(j+1)h_{j+1}^*\le(d-j)(m-d-j)h_j^*\qquad(1\le j<d).
\end{equation}
\end{proposition}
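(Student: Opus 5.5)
The plan is to derive the expansion \eqref{eq:rank-two-coefficient-expansion} directly from \eqref{circle:eq:fjs} using the absorption identity $j\binom kj=k\binom{k-1}{j-1}$, and then to read off \eqref{eq:rank-two-tail-ratio} by comparing the expansion at indices $j$ and $j+1$ term by term. Throughout, write $B_k(j):=\binom{k-1}{j-1}\binom{m-k-1}{j-1}$, and note $B_k(j)=B_{m-k}(j)$.

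\emph{Step 1 (the expansion).} For the correction terms, \eqref{circle:eq:fjs} gives
\[
 jC_j(a,m)=\sum_{k=j}^{a}j\binom kj\binom{m-k-1}{j-1}=\sum_{k=1}^{a}kB_k(j).
\]
For the main term I will apply the same counting as in \eqref{eq:majority} with $X=E$: locate the position $k+1$ of the $(j+1)$st element of a $2j$-subset. This gives $\binom m{2j}=\sum_{k}\binom kj\binom{m-k-1}{j-1}$, hence
\[
 j\binom m{2j}=\sum_{k=1}^{m-1}kB_k(j).
\]
Summing the corrections over the classes gives $\sum_i\sum_{k\le a_i}kB_k(j)=\sum_k kc_kB_k(j)$, so
\[
 jh_j^*=\sum_{k=1}^{m-1}k(1-c_k)B_k(j).
\]
Folding the index $k>m/2$ onto $m-k$ via the symmetry of $B_k(j)$ combines the coefficients into $k+(m-k)-kc_k-(m-k)c_{m-k}=w_k$ when $2k<m$. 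The self-paired index $2k=m$ contributes $k(1-c_k)$. This is exactly \eqref{eq:rank-two-coefficient-expansion}.

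\emph{Step 2 (nonnegativity of $w_k$).} I expect this to be the main obstacle, and it is where connectivity ($s\ge3$) enters.

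If $2k=m$, then $c_k\le2$. Moreover, $c_k=2$ would force two classes of size exactly $m/2$, so $s=2$. Hence $c_k\le1$ and $w_k\ge0$.

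If $2k<m$, at most one class has size $\ge m-k>m/2$.
\begin{itemize}
\item If $c_{m-k}=0$, then $kc_k\le\sum_i a_i=m$, so $w_k\ge0$.
\item If $c_{m-k}=1$, let $A\ge m-k$ be the size of that big class. It also counts toward $c_k$, since $m-k>k$. The remaining classes contain $m-A\le k$ elements in total, so at most one of them has size $\ge k$. If one does, it has size exactly $k$ and $A=m-k$, which exhausts $E$ and forces $s=2$, a contradiction. Hence $c_k=1$, and $w_k=m-k-(m-k)=0$.
\end{itemize}

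\emph{Step 3 (the tail ratio).} For $j<k$ and $j\le m/2$, the ratio of consecutive terms is
\[
 \frac{B_k(j+1)}{B_k(j)}=\frac{(k-j)(m-k-j)}{j^2},
\]
and $B_k(j+1)=0$ when $k\le j$. By Step 2, every term in the expansion is nonnegative. Therefore $h_j^*>0$ exactly when some $k\ge j$ has $w_k>0$, so $d=\max\{k:w_k>0\}$ and $d\le\lfloor m/2\rfloor$.

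Multiplying the expansion at index $j+1$ by $j$ and substituting the ratio gives
\[
 j(j+1)h_{j+1}^*=\sum_{k\le d}w_kB_k(j)\,\frac{(k-j)(m-k-j)}{j}.
\]
The factor $(k-j)(m-k-j)$ is nondecreasing in $k$ on the range $j\le k\le m/2$, so it is at most $(d-j)(m-d-j)$ for every $k\le d$. Terms with $k<j$ vanish on both sides. Comparing with $(d-j)(m-d-j)h_j^*=\frac{(d-j)(m-d-j)}{j}\sum_k w_kB_k(j)$ then yields \eqref{eq:rank-two-tail-ratio}.
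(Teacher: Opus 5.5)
Your proposal is correct and follows essentially the same route as the paper. The steps match one for one:
\begin{itemize}
\item the same absorption identity $j\binom kj=k\binom{k-1}{j-1}$ applied to $\binom m{2j}=C_j(m-1,m)$, with folding $k\leftrightarrow m-k$;
\item the same use of $s\ge3$ to force $c_k=c_{m-k}=1$ whenever some class has size at least $m-k$;
\item the same termwise comparison on $j\le k\le d\le\lfloor m/2\rfloor$, where the paper phrases the monotonicity as $(d-j)(m-d-j)-(k-j)(m-k-j)=(d-k)(m-d-k)\ge0$.
\end{itemize}
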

\begin{proof}
With $a=m-1$, \eqref{eq:majority} reads
\[
 \binom m{2j}=C_j(m-1,m)
 =\sum_{k=1}^{m-1}\binom kj\binom{m-k-1}{j-1}.
\]
By the definition of $C_j(a,m)$ in \eqref{eq:majority}, we deduce
\[
 \sum_{i=1}^{s}C_j(a_i,m)
 =\sum_{i=1}^{s}\sum_{k=1}^{a_i}\binom kj\binom{m-k-1}{j-1}
 =\sum_{k=1}^{m-1}c_k\binom kj\binom{m-k-1}{j-1}.
\]
Equation~\eqref{circle:eq:fjs} and
$j\binom kj=k\binom{k-1}{j-1}$ imply
\[
 jh_j^*=\sum_{k=1}^{m-1}k(1-c_k)
       \binom{k-1}{j-1}\binom{m-k-1}{j-1}.
\]
For $2k<m$, the terms indexed by $k$ and $m-k$ have the same
binomial product, and
\[
 k(1-c_k)+(m-k)(1-c_{m-k})=w_k.
\]
When $2k=m$, the middle term has coefficient $w_k$.
This proves the identity in \eqref{eq:rank-two-coefficient-expansion}.

We next show that $w_k\ge0$.
Fix $1\le k\le\lfloor m/2\rfloor$.
If $c_{m-k}=0$, then $kc_k\le\sum_{i=1}^{s}a_i=m$.
Thus $w_k=m-kc_k\ge0$ when $2k<m$, and $w_k=k>0$ when $2k=m$.
Otherwise, relabel the parallel classes so that $a_s\ge m-k$. Since
\[
 \sum_{i=1}^{s-1}a_i=m-a_s\le k,\qquad
 a_i\le k-(s-2)<k\quad(i<s),
\]
we have $c_k=c_{m-k}=1$. Hence $w_k=0$.

To prove \eqref{eq:rank-two-tail-ratio}, we first identify the largest index with positive weight. Note that
\[
 \binom{k-1}{j-1}\binom{m-k-1}{j-1}>0
 \quad\Longleftrightarrow\quad 1\le j\le k.
\]
For every $k$ with $w_k>0$, the $k$th summand is positive precisely
when $1\le j\le k$. Since no cancellation occurs,
\eqref{eq:rank-two-coefficient-expansion} implies
$d=\max\{k:w_k>0\}\le\lfloor m/2\rfloor$ whenever $d\ge1$. Fix $1\le j<d$. For $j\le k\le d$, we have
\[
 (d-j)(m-d-j)-(k-j)(m-k-j)=(d-k)(m-d-k)\ge0.
\]
Applying \eqref{eq:rank-two-coefficient-expansion}
at $j+1$ and $j$, we have
\[
 \begin{aligned}
 j(j+1)h_{j+1}^*
 &=\frac1j\sum_{k=j}^{d}w_k(k-j)(m-k-j)
   \binom{k-1}{j-1}\binom{m-k-1}{j-1}\\
 &\le\frac{(d-j)(m-d-j)}j
   \sum_{k=j}^{d}w_k\binom{k-1}{j-1}\binom{m-k-1}{j-1}\\
 &=(d-j)(m-d-j)h_j^*.
 \end{aligned}
\]
This proves \eqref{eq:rank-two-tail-ratio}.
\end{proof}

\subsection{A lower coefficient bound}

The complementary lower bound requires estimates for the terms
$C_j(a_i,m)$ in \eqref{circle:eq:fjs}.
We first compare $C_{j+1}(v,m)$ with $C_j(v,m)$.

\begin{lemma}\label{lem:correction-normalized-ratio}
Let \(m,v,j\) be integers with \(m\ge2v+1\) and \(1\le j<v\).
Then
\[
 (j+1)^2C_{j+1}(v,m)>(v-j)(m-v-j-1)C_j(v,m).
\]
\end{lemma}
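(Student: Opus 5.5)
The plan is to prove the inequality termwise, but over a reorganized sum, exploiting the slack factor $(j+1)/j$ visible at the top term. By \eqref{circle:eq:fjs} and \eqref{eq:majority},
\[
 C_j(v,m)=\sum_{k=j}^{v}T_j(k),\qquad T_j(k):=\binom kj\binom{m-k-1}{j-1},
\]
and the ratio of corresponding terms is
\[
 (j+1)^2\frac{T_{j+1}(k)}{T_j(k)}=\frac{(j+1)}{j}(k-j)(m-k-j)=:f(k).
\]
At the top index $k=v$ this gives $f(v)=\frac{j+1}{j}(v-j)(m-v-j)>(v-j)(m-v-j-1)$. So the statement holds for the top term with room to spare. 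Since $m\ge2v+1$, the function $(k-j)(m-k-j)$ is increasing on $k\le v$, so lower terms have smaller multipliers $f(k)$. The task is therefore to show that the weighted average of $f$ with weights $T_j(k)$ still exceeds $(v-j)(m-v-j-1)$. Equivalently, I must show
\[
 D_j(v):=\sum_{k=j}^{v}T_j(k)\Bigl[f(k)-(v-j)(m-v-j-1)\Bigr]>0 .
\]

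I would prove $D_j(v)>0$ by induction on $v$ (with $m$ and $j$ fixed). The base case is $v=j+1$. There $C_{j+1}(j+1,m)=1$ and $C_j(j+1,m)=\binom{m-j-1}{j-1}+(j+1)\binom{m-j-2}{j-1}$, and I check the inequality directly. For the step $v\to v+1$ (still requiring $m\ge 2v+3$), two things happen: the new term $k=v+1$ enters, and the threshold rises by
\[
 (v+1-j)(m-v-j-2)-(v-j)(m-v-j-1)=m-2v-2 .
\]
The step therefore amounts to
\[
 T_j(v+1)\bigl[f(v+1)-(v+1-j)(m-v-j-2)\bigr]\ \ge\ (m-2v-2)\,C_j(v,m).
\]
Here the left bracket equals $(v+1-j)\bigl[(m-v-j-1)/j+1\bigr]$.

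To bound $C_j(v,m)$ by the new top term, I would use a geometric decay estimate:
\[
 \frac{T_j(k-1)}{T_j(k)}=\frac{k-j}{k}\cdot\frac{m-k-1}{m-k-j}\le\frac{k-j}{k}\cdot\frac{m-k}{m-k-j+1}.
\]
Summing gives a bound of the shape $C_j(v,m)\le \frac{v+1}{j+1}\cdot\frac{m-v-j}{m-2v-1}\,T_j(v+1)$, or something comparable. A cleaner route would be a hockey-stick–type identity: the increment \eqref{eq:correction-increment} shows that $C_j(v,m)$ is a partial sum of products that can be compared with $\binom{v+1}{j+1}\binom{m-v-2}{j-1}$.

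The main obstacle is exactly this comparison between the whole sum $C_j(v,m)$ and its top term. Pairing terms naively, whether by $k$ or by the number of elements in $X$, fails when $m$ is much larger than $v$. In that regime the weights are nearly of the form $\binom kj$, so the mass is concentrated near $k=v$ only polynomially, with $C_j\approx \frac{v+1}{j+1}T_j(v)$. If the induction bookkeeping becomes messy, the fallback is the asymptotically sharp case. I would write $(j+1)^2C_{j+1}-(v-j)(m-v-j-1)C_j$ via summation by parts in $k$, using $\sum_{k\le v}\binom kj=\binom{v+1}{j+1}$ together with the monotonicity of $\binom{m-k-1}{j-1}$. This should reduce the claim to an inequality between explicit products of binomials, whose positivity follows from $m\ge 2v+1$.
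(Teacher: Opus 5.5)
Your set-up is the paper's. Your $D_j(v)$ is exactly the paper's $\xi_v$, and with $u=v+1$ your reduction of the inductive step to
\[
 \frac{(u-j)(m-u)}{j}\,T_j(u)\ \ge\ (m-2u)\,C_j(u-1,m)
\]
is the paper's formula for $\xi_u-\xi_{u-1}$. The gap is that this inequality, which carries the whole content of the lemma, is never proved. The bound you sketch, $C_j(v,m)\le\frac{v+1}{j+1}\cdot\frac{m-v-j}{m-2v-1}\,T_j(v+1)$, does not close it in general. For large $m$ the step has a margin of only about $(j+1)/j$. Your bound, however, exceeds the true ratio $C_j(v,m)/T_j(v+1)\approx\frac{v+1-j}{j+1}$ by the factor $\frac{v+1}{v+1-j}$; your heuristic $C_j\approx\frac{v+1}{j+1}T_j(v)$ is relative to $T_j(v)$, not to the new top term. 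Hence the step fails whenever $v+1<j(j+1)$ and $m$ is large. For example, with $j=2$, $v=3$, $m=20$ the step requires $1440\ge 12\,C_2(3,20)=780$, but your bound only gives $12\,C_2(3,20)\le 21600/13>1440$. The summation-by-parts fallback is not specific enough to count as an argument.

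What is missing is the right way to sum the ratio estimate. Both factors of $T_j(k-1)/T_j(k)=\frac{k-j}{k}\cdot\frac{m-k}{m-k-j+1}$ are nondecreasing in $k$. So for $j<k\le u$ each ratio is at most its value $\rho=\frac{(u-j)(m-u)}{u(m-u-j+1)}$ at $k=u$. Summing gives $C_j(u-1,m)\le\rho\,C_j(u,m)=\rho\bigl(C_j(u-1,m)+T_j(u)\bigr)$. Since $u(m-u-j+1)-(u-j)(m-u)=j(m-2u)+u>0$, this rearranges to $C_j(u-1,m)\le\frac{(u-j)(m-u)}{j(m-2u)+u}\,T_j(u)$. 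Multiplying by $m-2u$ gives a quantity strictly below $\frac{(u-j)(m-u)}{j}T_j(u)$. So every increment is strictly positive, and you may start from $D_j(j)=0$ with no separate base case.

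This also sidesteps your base-case computation, which is wrong: $C_{j+1}(j+1,m)=\binom{m-j-2}{j}$, not $1$. With $1$, the asserted inequality at $v=j+1$ fails for large $m$. Finally, your first expression for $T_j(k-1)/T_j(k)$ is off by one, and the displayed ``$\le$'' is reversed when $j\ge2$. The right-hand side you wrote is, however, the exact ratio.
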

\begin{proof}
For $j\le u\le v$, set
\[
 \xi_u=(j+1)^2C_{j+1}(u,m)-(u-j)(m-u-j-1)C_j(u,m),
 \quad \omega_u=\binom uj\binom{m-u-1}{j-1}.
\]
We have $\xi_j=0$. Extracting the coefficients of $t^j$ and $t^{j+1}$ in
\eqref{eq:correction-increment} with $r=u$ gives, for $j<u\le v$,
\[
 \xi_u-\xi_{u-1}
 =\frac{(u-j)(m-u)}j\omega_u-(m-2u)C_j(u-1,m).
\]
To bound the last term, note that for $j<k\le u$,
\[
 \frac{\omega_{k-1}}{\omega_k}
 =\left(1-\frac jk\right)
  \left(1+\frac{j-1}{m-k-j+1}\right)
 \le\frac{(u-j)(m-u)}{u(m-u-j+1)},
\]
because both factors are nonnegative and nondecreasing in $k$.
Multiplying by $\omega_k$ and summing over $k=j+1,\ldots,u$, with
$\sum_{k=j+1}^u\omega_k\le C_j(u,m)$, yields
\[
 C_j(u-1,m)
 \le\frac{(u-j)(m-u)}{u(m-u-j+1)}C_j(u,m).
\]
Substituting $C_j(u,m)=C_j(u-1,m)+\omega_u$ and using
\[
 u(m-u-j+1)-(u-j)(m-u)=j(m-2u)+u>0,
\]
where $m>2u$, we obtain
\[
 C_j(u-1,m)
 \le\frac{(u-j)(m-u)}{j(m-2u)+u}\omega_u.
\]
After substitution, $\xi_u-\xi_{u-1}$ satisfies
\[
 \xi_u-\xi_{u-1}
 \ge\frac{u(u-j)(m-u)}{j(j(m-2u)+u)}\omega_u>0.
\]
Summing over $u=j+1,\ldots,v$ proves $\xi_v>0$.
\end{proof}

To sum over the parallel-class sizes, we next bound the positive part of a linear combination of $C_j$ and $C_{j+1}$. A real-valued function $f$ on an interval of integers is called \emph{discretely convex} if
\[
 f(k-1)+f(k+1)\ge2f(k)
\]
whenever $k-1,k,k+1$ belong to its domain. Equivalently, its successive differences $f(k+1)-f(k)$ are nondecreasing.

Fix integers $m,j$ with $1\le j<\lfloor m/2\rfloor$ and a real number
$\lambda\ge0$. For integer $a$, define
\[
 F(a):=(j+1)^2C_{j+1}(a,m)-\lambda C_j(a,m)
 \qquad(0\le a<m).
 \]
Its positive part on $0\le a\le\lfloor m/2\rfloor$ is
\[
 F_+(a):=\max\{F(a),0\}.
\]
The next lemma bounds $F_+(a)$ in terms of its value at a larger index.
\begin{lemma}\label{lem:correction-positive-part}
The function $F_+$ is nondecreasing and discretely convex, with
$F_+(0)=F_+(1)=0$. Moreover,
\begin{equation}\label{eq:convex-secant}
 (v-1)F_+(a)\le(a-1)F_+(v)
 \qquad(1\le a\le v\le\lfloor m/2\rfloor),
\end{equation}
where $a,v$ are integers.
\end{lemma}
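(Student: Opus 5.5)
The plan is to analyse the successive differences of $F$ directly and show that they change sign at most once. Then $F$ first decreases and afterwards increases, and the differences of $F$ itself are nondecreasing wherever they are nonnegative. All three assertions about $F_+$ follow from this, and \eqref{eq:convex-secant} is then the standard secant inequality for convex functions anchored at $a=1$.

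\emph{Step 1: the increments.} Write $\omega^{(i)}_a:=\binom ai\binom{m-a-1}{i-1}$, so that $C_i(a,m)-C_i(a-1,m)=\omega^{(i)}_a$ by \eqref{eq:correction-increment}. Using $\binom a{j+1}=\frac{a-j}{j+1}\binom aj$ and $\binom{m-a-1}{j}=\frac{m-a-j}{j}\binom{m-a-1}{j-1}$, I get
\[
 \Delta(a):=F(a)-F(a-1)=\omega^{(j)}_a\,\beta(a),\qquad
 \beta(a):=\frac{(j+1)(a-j)(m-a-j)}{j}-\lambda .
\]
On $1\le a\le\lfloor m/2\rfloor$ the quadratic $(a-j)(m-a-j)$ increases, since its increment $m-2a-1$ is nonnegative for $a<m/2$. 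Hence $\beta$ is increasing there.

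\emph{Step 2: $\omega^{(j)}_a$ is nondecreasing.} The computation already made in the proof of \autoref{lem:correction-normalized-ratio} applies here. For $j<k\le m/2$ it gives $\omega_{k-1}/\omega_k=(k-j)(m-k)/[k(m-k-j+1)]$, and
\[
k(m-k-j+1)-(k-j)(m-k)=j(m-2k)+k>0 .
\]
So $\omega^{(j)}_{k-1}\le\omega^{(j)}_k$ in that range. For $k\le j$ the value $\omega^{(j)}_{k-1}$ is $0$, so monotonicity holds there trivially.

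\emph{Step 3: the values at $0$ and $1$.} We have $C_j(0,m)=C_{j+1}(0,m)=0$, which gives $F(0)=0$. Also $C_{j+1}(1,m)=0$ because $j+1\ge2$, so $F(1)=-\lambda C_j(1,m)\le0$. Therefore $F_+(0)=F_+(1)=0$.

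\emph{Step 4: shape of $F_+$.} By Step 1, the sign of $\Delta(a)$ is that of $\beta(a)$ whenever $\omega^{(j)}_a>0$, and $\beta$ is increasing. So there is a threshold before which $\Delta\le0$ and after which $\Delta\ge0$. Starting from $F(0)=0$, this means $F\le0$ up to some index. From the first index $a_0$ with $F(a_0)>0$ onward, $F$ is nondecreasing, and hence stays positive. Consequently $F_+$ vanishes below $a_0$ and equals $F$ from $a_0$ on, which makes $F_+$ nondecreasing.

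For discrete convexity, note that for $a>a_0$ both $\omega^{(j)}_a\ge0$ and $\beta(a)\ge0$ are nondecreasing (Steps 1 and 2). Their product $\Delta$ is therefore nondecreasing in that range, so the differences of $F_+$ are nondecreasing there. At the junction, the differences of $F_+$ are $0$ before $a_0$, then $F(a_0)$, then $\Delta(a_0+1)$. Since $F(a_0-1)\le0$, we have $F(a_0)\le F(a_0)-F(a_0-1)=\Delta(a_0)\le\Delta(a_0+1)$, where the last inequality is monotonicity of $\Delta$ on the nonnegative part ($\Delta(a_0)>0$). Together with $0\le F(a_0)$, this shows the differences of $F_+$ are nondecreasing throughout, which is the discrete convexity claim.

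\emph{Step 5: the secant bound.} The function $F_+$ is discretely convex with $F_+(1)=0$. Hence the average slope $F_+(a)/(a-1)=\frac1{a-1}\sum_{i=2}^{a}[F_+(i)-F_+(i-1)]$ is an average of nondecreasing differences, so it is nondecreasing in $a$ for $a\ge2$. This gives \eqref{eq:convex-secant} for $2\le a\le v$. When $a=1$ the inequality reads $(v-1)\cdot0\le0$, which holds trivially.

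The main obstacle is the junction analysis in Step 4. Before $a_0$ the factor $\beta$ is negative, so the product $\Delta=\omega\beta$ need not be monotone there. Passing to $F_+$ is exactly what removes this difficulty, and the key inequality that makes the junction work is $F(a_0)\le\Delta(a_0)$.
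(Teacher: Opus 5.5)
Your proposal is correct and takes essentially the same route as the paper. You factor the increment $F(a)-F(a-1)$ as $\omega_a$ times $\frac{(j+1)(a-j)(m-a-j)}{j}-\lambda$, show that both factors are nondecreasing so increments stay positive and nondecreasing once positive, and obtain \eqref{eq:convex-secant} by averaging the increments of $F_+$ starting from $F_+(1)=0$. The only difference is organizational: the paper checks convexity of $F_+$ pointwise by splitting on the sign of $F(a)$ (using $F_+\ge F$ and $F_+\ge 0$), whereas you describe $F_+$ globally as $0$ followed by $F$ and handle the junction via $F(a_0)\le\Delta(a_0)\le\Delta(a_0+1)$.
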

\begin{proof}
We prove monotonicity and discrete convexity of $F_+$ by examining the increments of $F$.
By definition, $F(0)=0$ and $F(1)\le0$, so $F_+(0)=F_+(1)=0$.
For \(j\le a\le \lfloor m/2\rfloor\), put \(\omega_a=C_j(a,m)-C_j(a-1,m)=\binom aj\binom{m-a-1}{j-1}\). Then
\[
 F(a)-F(a-1)
   =\omega_a\left(\frac{(j+1)(a-j)(m-a-j)}j-\lambda\right).
\]
The increment vanishes for $1\le a<j$.
For $j\le a<\lfloor m/2\rfloor$, we have $m-a-1\ge a+1$, so
\[
 \frac{\omega_{a+1}}{\omega_a}
 =\frac{a+1}{a+1-j}\left(1-\frac{j-1}{m-a-1}\right)
 \ge\frac{a+2-j}{a+1-j}>1.
\]
Also,
\[
 (a+1-j)(m-a-j-1)-(a-j)(m-a-j)=m-2a-1>0.
\]
Thus both $\omega_a$ and
$(j+1)(a-j)(m-a-j)/j-\lambda$ increase with $a$.
Since $\omega_a>0$, once the second factor is positive, both positive factors
are increasing. Hence all subsequent increments of $F$ are positive and
nondecreasing.

Fix $1\le a<\lfloor m/2\rfloor$ with $F(a)>0$.
The identity $F(a)=\sum_{u=1}^a(F(u)-F(u-1))$ shows that a positive
increment occurs at or before $a$. Hence
\[
 F(a+1)-F(a)\ge F(a)-F(a-1)>0.
\]
Since $F_+(a)=F(a)$ and $F_+\ge F$, we have
\[
 F_+(a+1)\ge F(a+1)>F(a)=F_+(a),
\]
and
\[
 F_+(a-1)+F_+(a+1)\ge F(a-1)+F(a+1)\ge2F(a)=2F_+(a).
\]
If $F(a)\le0$, then $F_+(a)=0$, so monotonicity and discrete convexity
at $a$ follow from $F_+\ge0$. Thus both properties hold at every interior
index. The equality $F_+(0)=F_+(1)=0$ also gives the monotonicity step
from $0$ to $1$.

For \eqref{eq:convex-secant}, both sides vanish when $a=1$, including $v=1$.
For $1<a\le v$, we compare the averages of the
nondecreasing increments, using $F_+(1)=0$:
\[
 \frac{F_+(a)}{a-1}
 =\frac1{a-1}\sum_{u=2}^a\bigl(F_+(u)-F_+(u-1)\bigr)
 \le\frac1{v-1}\sum_{u=2}^v\bigl(F_+(u)-F_+(u-1)\bigr)
 =\frac{F_+(v)}{v-1}.
\]
Multiplication by $(a-1)(v-1)$ proves \eqref{eq:convex-secant}.
\end{proof}

Put $a_{\max}=\max_{1\le i\le s}a_i$ and $d=\deg h_M^*$.
The degree formula below also follows from the results of
Higashitani and Matsushita~\cite[Section 3 and Proposition 4.2]{HigashitaniMatsushita},
since $P(M)$ is the edge polytope of the complete multipartite graph
with part sizes $a_1,\ldots,a_s$.
The next lemma recovers the degree formula from
\eqref{eq:rank-two-coefficient-expansion}.

\begin{lemma}\label{lem:rank-two-degree-positive}
Suppose $m\ge 4$. Then
\[
 d=\min\{\lfloor m/2\rfloor,m-a_{\max}-1\},
 \qquad h_0^*,\ldots,h_d^*>0.
\]
\end{lemma}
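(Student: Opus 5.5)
We will read off both claims from the expansion \eqref{eq:rank-two-coefficient-expansion} in \autoref{prop:rank-two-coefficient-expansion}. There we saw that every $w_k\ge0$. We also saw that the $k$th binomial product $\binom{k-1}{j-1}\binom{m-k-1}{j-1}$ is positive exactly when $1\le j\le k$, for $k\le\lfloor m/2\rfloor$. Hence
\[
 d=\max\{k\le\lfloor m/2\rfloor:w_k>0\},
\]
and for every $1\le j\le d$ the summand with $k=d$ is strictly positive. This already gives $h_j^*>0$ for $1\le j\le d$, and $h_0^*=1$. So the work reduces to identifying the largest $k\le\lfloor m/2\rfloor$ with $w_k>0$. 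We must also check that $d\ge1$, which follows from the formula once it is established, since $m\ge4$ and connectivity ($s\ge3$) give $m-a_{\max}-1\ge1$.

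\emph{Vanishing for large $k$.} Suppose $k\ge m-a_{\max}$. Then $c_{m-k}\ge1$ because the largest class has size at least $m-k$. The argument in the proof of \autoref{prop:rank-two-coefficient-expansion} covers the case $c_{m-k}\ne0$ and gives $w_k=0$; this holds both when $2k<m$ and when $2k=m$. So $w_k=0$ for $m-a_{\max}\le k\le\lfloor m/2\rfloor$.

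\emph{Positivity at $k^*:=\min\{\lfloor m/2\rfloor,m-a_{\max}-1\}$.} Here $m-k^*>a_{\max}$, so $c_{m-k^*}=0$. Then $w_{k^*}=m-k^*c_{k^*}$ if $2k^*<m$, and $w_{k^*}=k^*>0$ if $2k^*=m$. In the first case we need the strict inequality $k^*c_{k^*}<m$. The classes of size at least $k^*$ contribute at least $k^*c_{k^*}$ to $m$, so equality would force every class to have size exactly $k^*$ or $0$. That is, all $s\ge3$ classes would have size $k^*$, giving $m=sk^*\ge3k^*$. But then $a_{\max}=k^*$, and $k^*\le m-a_{\max}-1=m-k^*-1$ is consistent, so no contradiction arises yet. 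In this equal-size situation, however, $m-a_{\max}-1=(s-1)k^*-1\ge 2k^*-1\ge k^*$, with equality only if $s=2$ or $k^*=1$... this needs care. The correct reading is that $k^*$ equals the minimum, and all classes of size $k^*$ forces $k^*=a_{\max}$. Then $k^*=\min\{\lfloor sk^*/2\rfloor,(s-1)k^*-1\}$, which for $s\ge3$ and $k^*\ge1$ exceeds $k^*$ unless $k^*=1$, $s=3$, $m=3$, excluded by $m\ge4$. I expect this equality-case bookkeeping to be the main obstacle, and I will redo it carefully, splitting on whether $k^*=\lfloor m/2\rfloor$ or $k^*=m-a_{\max}-1$.

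Combining the two steps, $d=k^*$ and all $h_j^*$ with $0\le j\le d$ are positive.
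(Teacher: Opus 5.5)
Your proposal is correct and follows essentially the paper's route. You read $d$ off the weight expansion as the largest $k\le\lfloor m/2\rfloor$ with $w_k>0$, kill $w_k$ for $k\ge m-a_{\max}$ via the $c_{m-k}\ne0$ case of the proposition's proof, and show $w_{k^*}>0$ using $c_{m-k^*}=0$. Where the paper splits on $a_{\max}\ge m/2$ and the parity of $m$, bounding $c_k\le2$ (with $(m-2,1,1)$ treated separately), you get $k^*c_{k^*}<m$ uniformly from the equality case, in which all classes have size $k^*=a_{\max}$, forcing $s=3$, $k^*=1$, $m=3$. That final argument is sound, but the exploratory passage before it should be deleted, including the slip ``equality only if $s=2$ or $k^*=1$'', which should read ``only if $s=3$ and $k^*=1$''.
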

\begin{proof}
We use the nonnegative weights in
\eqref{eq:rank-two-coefficient-expansion}.
Since $s\ge3$, we have $a_{\max}\le m-2$.
Suppose $a_{\max}\ge m/2$.
Every index $m-a_{\max}-1<k\le\lfloor m/2\rfloor$ satisfies
$m-k\le a_{\max}$, so $c_{m-k}\ge1$ and $w_k=0$ by the proof of
\autoref{prop:rank-two-coefficient-expansion}.
At $k=m-a_{\max}-1$, the sizes of the other parallel classes sum to $k+1$.
If $k\ge2$, at most one of them has size at least $k$, so
$c_k\le2$, $c_{m-k}=0$, and $w_k\ge m-2k>0$.
If $k=1$, the parallel-class sizes are $(m-2,1,1)$ and $w_1=m-3>0$.
If $a_{\max}<m/2$, take $k=\lfloor m/2\rfloor$.
For even $m$, we have $c_k=0$ and $w_k=m/2>0$.
For odd $m$, at most two parallel classes have size $k$, and
$w_k=m-kc_k\ge1$.
In either case,
\[
 \max\{k:w_k>0\}=\min\{\lfloor m/2\rfloor,m-a_{\max}-1\}.
\]
By \autoref{prop:rank-two-coefficient-expansion}, this maximum equals $d$. Its contribution to
\eqref{eq:rank-two-coefficient-expansion} is positive for every
$1\le j\le d$, and all other contributions are nonnegative.
Thus the degree formula and positivity hold for $1\le j\le d$;
the constant term is $h_0^*=1$ by \eqref{circle:eq:fjs}.
\end{proof}

We now apply \autoref{lem:correction-normalized-ratio} and
\autoref{lem:correction-positive-part} to obtain the complementary
lower coefficient bound.

\begin{proposition}\label{prop:rank-two-lower-ratio}
Suppose $m\ge4$. For $1\le j<d$,
\begin{equation}\label{eq:rank-two-lower-ratio}
(j+1)^2h_{j+1}^*\ge(d-j)(m-d-j-1)h_j^*.
\end{equation}
\end{proposition}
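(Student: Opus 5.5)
The plan is to set $\lambda:=(d-j)(m-d-j-1)$ and define $F$ with this $\lambda$, exactly as before \autoref{lem:correction-positive-part}. By \eqref{circle:eq:fjs} and \eqref{eq:majority}, the quantity to be shown nonnegative is
\[
 (j+1)^2h_{j+1}^*-\lambda h_j^*
 =\Bigl[(j+1)^2\binom m{2j+2}-\lambda\binom m{2j}\Bigr]-\sum_{i=1}^sF(a_i).
\]
The bracket itself has the form $F(m-1)$, because $C_j(m-1,m)=\binom m{2j}$. The strategy is to absorb the contribution of the largest parallel class into this main term via the identities of \autoref{sec:coefficients}. The remaining classes are then controlled by $F\le F_+$ together with the secant bound \eqref{convex-secant}. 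Note that $j<d\le\lfloor m/2\rfloor$ by \autoref{lem:rank-two-degree-positive}, so the standing hypotheses of \autoref{lem:correction-positive-part} hold.

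\emph{Case $a_{\max}\ge m/2$.} Here $d=m-a_{\max}-1$ by \autoref{lem:rank-two-degree-positive}. I would apply \eqref{eq:block-identity} with $r=d-1$, i.e.\ blocks of sizes $d$ and $m-d$. This rewrites the main term minus $F(a_{\max})$ as $-F(d-1)$ plus an explicit contribution from $J_{d,m-d}$, modulo the correction increment \eqref{eq:correction-increment} that passes from $p^*_{m-d-2,m}$ to $p^*_{a_{\max},m}$. The other classes have sizes summing to $d+1$, and each has size at most $d-1$ by the argument in the proof of \autoref{prop:rank-two-coefficient-expansion}. For these classes I would use $F(a_i)\le F_+(a_i)\le \frac{a_i-1}{d-1}F_+(d)$ from \eqref{convex-secant}. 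Summing gives at most $\frac{d+1-(s-1)}{d-1}F_+(d)\le F_+(d)$, using $s\ge3$. It then remains to check that the $J$-term dominates $F_+(d)$ together with the small leftover terms. The choice $\lambda=(d-j)(m-d-j-1)$ is precisely the one appearing in \autoref{lem:correction-normalized-ratio} with $v=d$, and that lemma (whose hypothesis $m\ge2d+1$ holds) shows $F(d)>0$, so $F_+(d)=F(d)$ is an honest binomial expression. The comparison with $\binom{d}{j}\binom{m-d}{j}$-type terms should then reduce to a ratio computation of the kind $(j+1)^2\binom d{j+1}\binom{m-d}{j+1}\ge\lambda\binom dj\binom{m-d}j$, which holds with room to spare since $(d-j)(m-d-j)\ge\lambda$.

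\emph{Case $a_{\max}<m/2$.} Here $d=\lfloor m/2\rfloor$, so $\lambda=(\lfloor m/2\rfloor-j)(\lceil m/2\rceil-j-1)$. Every $a_i$ now lies in the domain of $F_+$. Applying \eqref{convex-secant} with $v=d$ gives
\[
\sum_iF(a_i)\le\frac{m-s}{d-1}F_+(d)\le\frac{m-3}{d-1}F_+(d).
\]
I would instead pair $F_+(d)$ against the main term via \eqref{eq:block-identity} or \eqref{eq:overlap-identity} with $r\approx m/2$. Those identities express $\binom m{2j}$ as $2C_j(\lfloor m/2\rfloor)$, or as a near-symmetric variant, plus a central product $J$-coefficient. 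Hence the main term equals roughly $2F(d)$ plus a positive central term, and the needed inequality becomes $\bigl(\frac{m-3}{d-1}-2\bigr)F_+(d)\le$ central term. The coefficient on the left is at most about $1/(d-1)$, so this should be a mild estimate. Small $m$ (say $m\le5$) I would check directly.

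The main obstacle is the bookkeeping of these central and $J$-terms. Once $\lambda$ is inserted, one must verify that $(j+1)^2[t^{j+1}]J-\lambda[t^j]J$, together with the leftover increment terms, is nonnegative and dominates the $O(1/d)\,F_+(d)$ slack. I would first attempt this by bounding $F_+(d)\le(j+1)^2C_{j+1}(d,m)$ and using the counting interpretation \eqref{eq:majority} to compare each leftover term with a binomial product. If that fails, I would fall back on the explicit ratio $\omega_{k-1}/\omega_k$ estimate from the proof of \autoref{lem:correction-normalized-ratio}.
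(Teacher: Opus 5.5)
\textbf{There is a genuine gap in the even case $m=2d$ with $a_{\max}<m/2$.} Write $\sigma=(j+1)^2\binom m{2j+2}-\lambda\binom m{2j}$ and $D=(d-j)\binom dj^2$. Here the main term is not $2F(d)$ plus a positive central term. One checks that $F(d)-F(d-1)=D$, and \eqref{eq:block-identity} with $r=d-1$ gives $\sigma=2F(d-1)+D=2F(d)-D$. So your slack coefficient $\frac{2d-3}{d-1}-2=-\frac1{d-1}$ is negative, and your final inequality would require $F(d)\ge(d-1)D$, which is false in general.

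For example, take sizes $(3,3,2)$, so $m=8$ and $d=4$, and take $j=1$, so $\lambda=6$. Then $F(2)=2$, $F(3)=32$, $F(4)=80$ and $\sigma=112$. Your bound $\frac{m-s}{d-1}F_+(d)=\frac53\cdot 80$ exceeds $\sigma$, although $\sum_iF(a_i)=66$. Bounding through $F_+(d)$ is too lossy, because $F$ jumps by $D$ between $d-1$ and $d$.

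What is needed is $a_{\max}\le d-1$:
\begin{itemize}
\item For $j\le d-2$, \autoref{lem:correction-normalized-ratio} with $v=d-1$ gives $F(d-1)>0$.
\item Convexity and $F_+(0)=0$ give $F_+(a)\le\frac a{d-1}F_+(d-1)$, so $\sum_iF(a_i)\le\frac{2d}{d-1}F(d-1)=\frac d{d-1}(\sigma-D)$.
\item Since $\sigma=\frac{d(d-j)}{2j+1}\binom{2d}{2j}$, this is at most $\sigma$ if and only if $\binom{2d}{2j}\le(2j+1)\binom dj^2$.
\end{itemize}
That binomial inequality is the real content of this case, and it is absent from your proposal. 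It follows from Vandermonde's identity together with the maximality of $\binom dj^2$ among the products $\binom dk\binom d{2j-k}$. The case $j=d-1$ must be split off, since there $\lambda=0$.

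\textbf{The case $a_{\max}\ge m/2$ also needs correcting.} Because $m-(d-1)-2=a_{\max}$, \eqref{eq:block-identity} with $r=d-1$ needs no increment step. It gives $\sigma-F(a_{\max})=F(d-1)+(d-j)\binom dj\binom{m-d}j$, with $+F(d-1)$, not $-F(d-1)$.

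More importantly, the check you propose (nonnegativity of the $J$-term) is not what is required. You need this quantity to be at least $F(d)$, and it equals $F(d)$ exactly, since $F(d)-F(d-1)=(d-j)\binom dj\binom{m-d}j$. There is no room to spare. With your sign, the required inequality would fail whenever $F(d-1)>0$, for example at $m=8$, $d=3$, $j=1$, where $F(2)=2$. The paper gets $\sigma=F(a_{\max})+F(d)$ in one line from \eqref{eq:overlap-identity} with $r=a_{\max}$.

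Also, a non-maximal class can have size exactly $d$ (e.g.\ sizes $(a_{\max},d,1)$), not at most $d-1$ as you claim. This is harmless, because you take $v=d$.

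Your odd case works as outlined. There $\frac{m-3}{d-1}=2$ and $\sigma=2F(d)$ exactly, so $F(d)=\sigma/2>0$ and $F_+(d)=F(d)$.
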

\begin{proof}
By \autoref{lem:rank-two-degree-positive},
$d\le\lfloor m/2\rfloor$, hence $m\ge2d$.
Fix $1\le j<d$
and put $\lambda=(d-j)(m-d-j-1)\ge0$. Recall that
\[
F(a)=(j+1)^2C_{j+1}(a,m)-\lambda C_j(a,m),\quad F_+(a)=\max\{F(a),0\},
\]
and set
\begin{equation}\label{eq:rank-two-sigma}
 \sigma=(j+1)^2\binom m{2j+2}-\lambda\binom m{2j}=\Big[\frac{(j+1)(m-2j)(m-2j-1)}{2(2j+1)}-\lambda\Big]\binom m{2j}.
\end{equation}
It follows from \eqref{circle:eq:fjs} that
\[
 (j+1)^2h_{j+1}^*-\lambda h_j^*=\sigma-\sum_{i=1}^sF(a_i).
\]
Thus \eqref{eq:rank-two-lower-ratio} is equivalent to
\[
\sum_{i=1}^sF(a_i)\le\sigma.
\]
We prove this inequality in three cases, according to whether
\(a_{\max}\ge m/2\) and, otherwise, according to the parity of \(m\).
In the latter two cases, \autoref{lem:rank-two-degree-positive} gives
\(d=\lfloor m/2\rfloor\).

\emph{A parallel class of size at least \(m/2\).}
By \autoref{lem:rank-two-degree-positive},
$d=m-a_{\max}-1\ge2$ because $1\le j<d$.
Moreover, $a_{\max}\ge m/2$ implies $a_{\max}\ge d+1$, hence
$m=a_{\max}+d+1\ge2d+2$.
Relabel the parallel classes so that $a_s=a_{\max}$.
Then $\sum_{i=1}^{s-1}a_i=d+1$ and $s-1\ge2$.
The inequality $F(d)>0$ follows from
\autoref{lem:correction-normalized-ratio}.
Since $a_i\le d$ for $i<s$ and $\sum_{i=1}^{s-1}(a_i-1)\le d-1$,
\eqref{eq:convex-secant} implies
\[
 \sum_{i=1}^{s-1}F(a_i)\le\sum_{i=1}^{s-1}F_+(a_i)
 \le\frac{\sum_{i=1}^{s-1}(a_i-1)}{d-1}F_+(d)\le F_+(d)=F(d).
\]
To complete this case, we show that $\sigma=F(a_{\max})+F(d)$.
Equation~\eqref{eq:overlap-identity} with $r=a_{\max}$ and
$m-a_{\max}-1=d$ gives
\[
 \binom m{2j}-C_j(a_{\max},m)-C_j(d,m)
 =\binom{a_{\max}}j\binom dj.
\]
Substituting this identity at $j$ and $j+1$ into the definitions of
$\sigma$ and $F$, we obtain
\[
 \sigma-F(a_{\max})-F(d)=(j+1)^2\binom{a_{\max}}{j+1}\binom d{j+1}
                 -(d-j)(a_{\max}-j)\binom{a_{\max}}j\binom dj=0.
\]
Therefore
\[
 \sum_{i=1}^sF(a_i)=F(a_{\max})+\sum_{i=1}^{s-1}F(a_i)
 \le F(a_{\max})+F(d)=\sigma.
\]

\emph{Odd $m$, with $a_{\max}<m/2$.}
Here $m=2d+1$ and $\lambda=(d-j)^2$, so \eqref{eq:rank-two-sigma} gives
$\sigma=\binom m{2j}(d-j)(d+1)/(2j+1)>0$.
Then \eqref{eq:overlap-identity} with $r=d$ reads $\binom m{2j}-2C_j(d,m)=\binom d j^2$. Substitution into the definitions of $F$ and $\sigma$ simplifies the difference to
\[
 2F(d)-\sigma
 =\lambda\binom dj^2-(j+1)^2\binom d{j+1}^2=0.
\]
Hence $F(d)=\sigma/2>0$, so $F_+(d)=F(d)$.
Since $a_i\le d$ for every $i$ and $s\ge3$,
\eqref{eq:convex-secant} with $v=d$ gives
\[
 \sum_iF(a_i)\le\sum_iF_+(a_i)
 \le\frac{2d+1-s}{d-1}F_+(d)\le2F_+(d)=\sigma.
\]

\emph{Even $m$, with $a_{\max}<m/2$.}
Here $m=2d$, $a_{\max}\le d-1$, and $\lambda=(d-j)(d-j-1)$.
If $j=d-1$, then $\lambda=0$ and \eqref{eq:rank-two-lower-ratio}
follows from $h_d^*>0$ in \autoref{lem:rank-two-degree-positive}.
Assume $j\le d-2$, so $d\ge3$.
We first express $F(d-1)$ in terms of $\sigma$ to bound the sum of $F(a_i)$.
Equation~\eqref{eq:block-identity} with $r=d-1$ reads
$2C_j(d-1,m)=\binom m{2j}-\binom d j^2$.
Substitution into the definitions of $F$ and $\sigma$ yields
\[
 2F(d-1)-\sigma
 =\lambda\binom dj^2-(j+1)^2\binom d{j+1}^2
 =-(d-j)\binom dj^2.
\]
\autoref{lem:correction-normalized-ratio} with $v=d-1$ gives $F(d-1)>0$,
so $F_+(d-1)=F(d-1)$.
By Lemma~\ref{lem:correction-positive-part}, $F_+$ is discretely convex
and $F_+(0)=0$. Averaging its nondecreasing increments gives
\[
 F_+(a)\le\frac a{d-1}F_+(d-1)\qquad(1\le a\le d-1).
\]
Summing over the parallel-class sizes and using $\sum_i a_i=2d$,
we derive
\[
 \sum_iF(a_i)\le\sum_iF_+(a_i)\le\frac{2d}{d-1}F(d-1)
 =\frac d{d-1}\left[\sigma-(d-j)\binom dj^2\right].
\]
Equation~\eqref{eq:rank-two-sigma} with $m=2d$ and
$\lambda=(d-j)(d-j-1)$ gives
$\sigma=\frac{d(d-j)}{2j+1}\binom{2d}{2j}$.
It therefore suffices to prove $\binom{2d}{2j}\le(2j+1)\binom dj^2$.
For $j<k\le\min\{2j,d\}$,
\[
 \frac{\binom d k\binom d{2j-k}}{\binom d{k-1}\binom d{2j-k+1}}=\frac{d-k+1}{d-2j+k}\frac{2j-k+1}{k}<1,
\]
because both factors lie in $(0,1)$.
Symmetry about $k=j$ shows that the products are largest at $k=j$.
Counting $2j$-element subsets of two disjoint $d$-element sets by the
number chosen from the first set gives
\[
 \binom{2d}{2j}
 =\sum_{k=0}^{2j}\binom d k\binom d{2j-k}
 \le(2j+1)\binom d j^2.
\]
Consequently,
\[
\sum_i F(a_i)\le\frac{d}{d-1}\left[\sigma-(d-j)\binom dj^2\right]\le \sigma.
\]
\end{proof}

\subsection{The inequality at \texorpdfstring{$j=1$}{j=1}}\label{sec:ulc-first}
The lower bound \eqref{eq:rank-two-lower-ratio} applies only at positive
indices, so it does not cover the index-zero estimate needed for $j=1$.
The following estimate uses \eqref{eq:rank-two-coefficient-expansion}
and a count of bases instead.

\begin{lemma}\label{lem:rank-two-first-index}
If $d\ge2$, then
\[
                         2d h_2^*\le(d-1)(h_1^*)^2.
\]
\end{lemma}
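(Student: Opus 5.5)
The plan is to reduce the inequality to a lower bound on the number of bases, using the expansion \eqref{eq:rank-two-coefficient-expansion} at $j=1,2$. At $j=1$ it gives $h_1^*=\sum_k w_k$. At $j=2$ it gives $2h_2^*=\sum_k w_k(k-1)(m-k-1)$.

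By the proof of \autoref{prop:rank-two-coefficient-expansion}, $w_k=0$ for $k>d$ and $d\le\lfloor m/2\rfloor$. The function $k\mapsto(k-1)(m-k-1)$ is nondecreasing for $k\le m/2$, so
\[
 2h_2^*\le(d-1)(m-d-1)\,h_1^* .
\]
Hence it suffices to prove $d(m-d-1)\le h_1^*$. We rewrite $h_1^*$ as a count of bases. From \eqref{eq:majority} we have $C_1(a,m)=\binom{a+1}2$, and \eqref{circle:eq:fjs} then gives
\[
 h_1^*=\binom m2-\sum_i\binom{a_i}2-m=|\mathcal B(M)|-m .
\]
So the target becomes $|\mathcal B(M)|\ge m+d(m-d-1)$. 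We then split into cases using the degree formula of \autoref{lem:rank-two-degree-positive}.

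\emph{Case $a_{\max}\ge m/2$.} Here $d=m-a_{\max}-1$, and the remaining classes have total size $d+1$ spread over at least two classes. The bases meeting the largest class number $a_{\max}(d+1)=(m-d-1)(d+1)$. The bases inside the remaining $d+1$ elements number at least $d$, and at least $d+1$ unless those classes have sizes $(d,1)$. Indeed, $d+1$ elements in at least two classes with a smallest class of size $c\le(d+1)/2$ give at least $c(d+1-c)$ pairs; this is at least $d+1$ except when $c=1$ and there are exactly two classes. Since $(m-d-1)(d+1)+d+1=m+d(m-d-1)$, the target holds outside the configuration $(a_{\max},d,1)$.

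\emph{Case $a_{\max}<m/2$.} Here $d=\lfloor m/2\rfloor$ and $|\mathcal B(M)|=(m^2-\sum a_i^2)/2$. Under the constraint $a_i\le\lceil m/2\rceil-1$, this is minimized by making the classes as large as possible. For $m=2d$ the minimizer is $(d-1,d-1,2)$, giving $d^2+2d-3\ge d^2+d$ for $d\ge3$; for $m=4$ the case $a_{\max}<m/2$ cannot occur. For $m=2d+1$ the minimizer is $(d,d,1)$, which again misses the target by exactly one; every other admissible configuration has at least one more basis.

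\emph{Exceptional configurations.} Both exceptions are the configurations $(1,a,b)$ with $d=a\le b$, that is, the duals of $M(\Theta(1,a,b))$. These we treat exactly via \eqref{eq:theta-one}: $h_1^*=ab-1$ and $h_2^*=\binom a2\binom b2$ with $d=a\ge2$. The claim then reads
\[
 \tfrac{a^2b(b-1)}2\le(ab-1)^2 ,
\]
which holds since $(ab-1)^2-a^2b(b-1)/2\ge a^2b^2/2-2ab+1>0$ for $a\ge2$.

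I expect the main obstacle to be the extremal count of bases. One must verify carefully that the minimizers are as claimed, including the parity and small-$d$ edge cases, and that every configuration failing the crude bound $d(m-d-1)\le h_1^*$ is one of type $(1,a,b)$, which \eqref{eq:theta-one} then disposes of.
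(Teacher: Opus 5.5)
Your argument is correct, but it takes a different route from the paper.

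\textbf{How the paper argues.} The paper does not throw away the $k=1$ term. Since $w_1$ contributes to $h_1^*$ but not to $h_2^*$, it bounds $2h_2^*\le(d-1)(m-d-1)(h_1^*-w_1)$. It then only needs $h_1^*+w_1\ge d(m-d-1)$, because $(h_1^*+w_1)(h_1^*-w_1)\le(h_1^*)^2$. The slack $w_1=m-s\ge a_{\max}-1$ absorbs exactly the off-by-one deficits you encountered. So a crude two-case count of bases suffices:
\begin{itemize}
\item If $a_{\max}<d$, the paper uses $\sum_i a_i^2\le m(d-1)$.
\item If $a_{\max}\ge d$, it counts the bases meeting a largest class plus those in one further cross class. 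This gives $h_1^*\ge d(m-d-1)-1$, and $w_1\ge1$ closes the gap.
\end{itemize}
No extremal optimization and no exceptional family are needed.

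\textbf{What your route does.} You use the cruder bound $2h_2^*\le(d-1)(m-d-1)h_1^*$, so you must prove the sharper estimate $|\mathcal B(M)|\ge m+d(m-d-1)$. This requires the majorization argument. Your case analysis correctly shows that this estimate fails precisely, and by exactly one, for parallel-class sizes $(1,d,b)$ with $b\ge d$. You then dispose of that family correctly using the closed form \eqref{eq:theta-one}.

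\textbf{Trade-off.} Your route gives an exact description of where the crude inequality breaks. The cost is more case analysis and a dependence on \autoref{lem:short-coefficients}. The paper's bookkeeping with $w_1$ keeps the proof uniform and self-contained within the rank-two expansion.

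\textbf{A small slip.} The case $m=4$ with $a_{\max}<m/2$ is not vacuous: it is $U_{2,4}$, with sizes $(1,1,1,1)$. There $|\mathcal B(M)|=6=m+d(m-d-1)$, so your target holds with equality and nothing breaks. Still, you should replace the claim that the case cannot occur with this direct check.
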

\begin{proof}
By \autoref{prop:rank-two-coefficient-expansion}, $d\le\lfloor m/2\rfloor$, so $m\ge2d\ge4$. It follows from \autoref{lem:rank-two-degree-positive} that $a_{\max}\le m-d-1$. The $k=1$ term in \eqref{eq:rank-two-coefficient-expansion}
contributes to $h_1^*$ but not to $h_2^*$.
Keeping this term separate sharpens the estimate
\eqref{eq:rank-two-tail-ratio} at $j=1$ to
\begin{equation}\label{h12}
 h_1^*=\sum_{k=1}^d w_k,\qquad 2h_2^*=\sum_{k=2}^d w_k(k-1)(m-k-1) \le(d-1)(m-d-1)(h_1^*-w_1).
\end{equation}
Since $h_1^*-w_1=\sum_{k=2}^d w_k\ge0$, \eqref{h12} shows that
it suffices to prove
\[
 h_1^*+w_1\ge d(m-d-1).
\]
The bases are pairs from distinct parallel classes, so \eqref{circle:eq:fjs} and the definition of $w_1$ give
\[
 h_1^*=|\mathcal B(M)|-m=\frac{m^2-2m-\sum_i a_i^2}{2},
 \qquad w_1=m-s=\sum_i(a_i-1)\ge a_{\max}-1.
\]
If $a_{\max}<d$, then $\sum_i a_i^2\le m(d-1)$, so
\[
 h_1^*+w_1\ge h_1^*\ge\frac{m(m-d-1)}2\ge d(m-d-1).
\]
Suppose $a_{\max}\ge d$. Outside a largest parallel class there are
at least two parallel classes; choose one of size $a'$.
Pairs with one element in this parallel class and one in another remaining parallel class
form $a'(m-a_{\max}-a')$ bases. Moreover,
\[
 a'(m-a_{\max}-a')-(m-a_{\max}-1)
 =(a'-1)(m-a_{\max}-a'-1)\ge0.
\]
Including the $a_{\max}(m-a_{\max})$ bases meeting the largest parallel class,
we obtain
\[
 \begin{aligned}
 h_1^*&\ge a_{\max}(m-a_{\max})+(m-a_{\max}-1)-m\\
      &=d(m-d-1)+(a_{\max}-d)(m-d-1-a_{\max})-1\\
      &\ge d(m-d-1)-1,
 \end{aligned}
\]
where $d\le a_{\max}\le m-d-1$. Adding $w_1\ge a_{\max}-1\ge1$ proves $h_1^*+w_1\ge d(m-d-1)$. It follows from \eqref{h12} that
\[
2d h_2^*\le(d-1)(h_1^*+w_1)(h_1^*-w_1)\le(d-1)(h_1^*)^2.
\]
\end{proof}

\subsection{Proof of \texorpdfstring{\autoref{thm:rank-two-full-unimodal}}{Theorem 1.2}}\label{sec:rank-two-proof}
\begin{proof}[Proof]
By \eqref{eq:duality}, it suffices to consider rank-two matroids.
Deleting loops does not change $h_M^*$, so we may assume
that $M$ is loopless.
Suppose first that \(M\) is connected of rank two on \(m\ge4\) elements.
By \autoref{lem:rank-two-degree-positive},
$h_0^*,\ldots,h_d^*>0$.
For $2\le j<d$, \eqref{eq:rank-two-tail-ratio} at $j$ and \eqref{eq:rank-two-lower-ratio} at $j-1$ give
\[
 (j+1)(d-j+1)h_{j-1}^*h_{j+1}^*\le\frac{(d-j)(d-j+1)(m-d-j)}j h_{j-1}^*h_j^*\le j(d-j)(h_j^*)^2.
\]
This is \eqref{eq:ulc} for $j\ge2$.
For $d\ge2$, \eqref{eq:ulc} at $j=1$ follows from
\autoref{lem:rank-two-first-index}, since $h_0^*=1$.
When $d\le1$, there is no internal index. The only remaining
connected ground-set size is $m=3$, namely $M=U_{2,3}$, for which
$h_M^*=1$ by \eqref{circle:eq:fjs}.
If \(M\) is disconnected, then \(M=U_{1,a_1}\oplus U_{1,a_2}\)
for positive integers \(a_1,a_2\); see~\cite[Lemma 2.6]{FJS}.
Its base polytope is the product of two simplices, whose Ehrhart
$h^*$-polynomial is~\cite[Example 2.13]{FJS}
\[
 h_M^*(t)=\sum_{j\ge0}\binom{a_1-1}j\binom{a_2-1}j t^j.
\]
Set $d=\min(a_1-1,a_2-1)$ and $u=\max(a_1-1,a_2-1)$.
For $0\le j\le d$, the normalized coefficient is
$h_j^*/\binom dj=\binom uj>0$.
The ratios $\binom u{j+1}/\binom uj=(u-j)/(j+1)$ decrease with $j$
for $0\le j<u$.
 In all cases, the normalized sequence is positive and log-concave.
It remains to deduce strict log-concavity of the coefficient sequence.
For $1\le j<d$, \eqref{eq:ulc} implies
\[
 (h_j^*)^2
 \ge\left(1+\frac{d+1}{j(d-j)}\right)h_{j-1}^*h_{j+1}^*
 >h_{j-1}^*h_{j+1}^*.
\]
 Thus the positive sequence $(h_j^*)_{j=0}^d$ is strictly log-concave and hence unimodal.
\end{proof}

\section{Concluding remarks}\label{sec:questions}
It remains to determine which rank-two matroids with exactly three
parallel classes, each of size at least four, have real-rooted $h^*$-polynomials.

Our results suggest asking whether every graphic matroid, or more generally
every matroid, has a degree-normalized ultra log-concave Ehrhart
$h^*$-polynomial. One possible approach in higher rank is to find a
nonnegative coefficient expansion and compatible upper and lower
coefficient bounds analogous to those used here.

The infinite-family assertion in \autoref{thm:real-rootedness}(3) does not
provide an explicit threshold, and the finite example is not
claimed to be smallest.

\section*{Acknowledgements}
This work is supported by the Guangdong Basic and Applied Basic Research Foundation (Grant No. 2026A1515012237).

\section*{Declaration of generative AI and AI-assisted technologies}
The author conceived and directed the research, formulated the problems,
and led the development of the mathematical arguments and proofs.
OpenAI's Codex (GPT-6 Astra) served as an auxiliary tool under the author's supervision,
particularly in computational searches for counterexamples, the exploration
of candidate families, and the implementation of exact-arithmetic checks.
It also assisted with literature searches, proof checking, and the drafting
and revision of the manuscript. The author evaluated the computational evidence, reviewed and
finalized the proofs and exposition, and made the final decisions on the
results and their presentation. The author takes full responsibility for
the accuracy and integrity of the manuscript.

\end{document}